\newtheorem{theorem}{Theorem}
\newtheorem{proposition}[theorem]{Proposition}
\newtheorem{corollary}[theorem]{Corollary}
\newtheorem{observation}[theorem]{Observation}
\title{Efficient closed domination in digraph products\thanks{First author was partially supported by ARRS Slovenia under the grants 
P1-0297 and J1-9109.}}
\author{Iztok Peterin$^{(1)}$ and Ismael G. Yero$^{(2)}$\\
\\
$^{(1)}${\small Faculty of Electrical Engineering and Computer Science}\\
{\small University of Maribor,} {\small Koro\v{s}ka cesta 46, 2000 Maribor, Slovenia.} \\
{\small\it iztok.peterin\@@um.si} \\
$^{(2)}${\small Departamento de Matem\'aticas, Escuela Polit\'ecnica Superior de Algeciras}\\
{\small Universidad de C\'adiz,} {\small Av. Ram\'on Puyol s/n, 11202 Algeciras, Spain.} \\
{\small\it ismael.gonzalez\@@uca.es}
}
\date{}
\begin{document}

\maketitle

\begin{abstract}
\noindent A digraph $D$ is an efficient closed domination digraph if there exists a subset $S$ of $V(D)$ for which the closed out-neighborhoods centered in vertices of $S$ form a partition of $V(D)$. In this work we deal with efficient closed domination digraphs among several product of digraphs. We completely describe the efficient closed domination digraphs among lexicographic and strong products of digraphs. We characterize those direct products of digraphs that are efficient closed domination digraphs, where factors are either two cycles or two paths. Among Cartesian product of digraphs, we describe all such efficient closed domination digraphs such that they are a Cartesian product digraph either with a cycle or with a star.
\medskip

\noindent{\it Keywords:} efficient closed domination; digraphs, products of digraphs
\medskip

\noindent{\it AMS Subject Classification Numbers:} 05C69; 05C76

\end{abstract}

\section{Introduction}

Partitions of objects in mathematics is a natural procedure and if the elements of the partition satisfy some properties, then it frequently becomes of high interest and usefulness in the research community. Every partition is connected with an equivalence relation and enables a factor structure, which often bring simplification and deeper insight into the properties of the starting object itself. Therefore, it is natural to study different kind of partitions and the existence of them. Number theory and pure combinatorics, for instance, offer plenty of examples where this attracts the attention.

In graph theory, there is a wide range of possibilities and probably one of the most natural ones concerns vertex (open and closed) neighborhoods. The problem of existence of a partition of a vertex set of a graph into closed neighborhoods was relatively early started by Biggs in 1976 (see \cite{biggs-1973}). The theme became quite popular, and through the years, several combinatorial and computational results were presented. For instance, such partition has been studied for bipartite and chordal graphs in~\cite{smart-1995}, where the problem of determining if a graph has such a partition was proved to be NP-complete. In recent years, the direction of researches has been centered into finding which graphs, belonging to some certain graph classes, are such. For only two examples of such type of researches, we suggest \cite{knor-2012} for cubic vertex-transitive graphs and \cite{jha-2014} for twisted tori. Along all such researches, graphs having such a vertex partition were called \emph{1-perfect graphs} (which comes from \cite{biggs-1973}, although such name appeared after), or \emph{efficient domination graphs} (which comes from \cite{bange}). More recently, efficient domination graphs, has been called \emph{efficient closed domination graphs}, in order to be more consequent with the closed neighborhoods which are used (see \cite{sandi-pet-ye}).

On the other hand, the study of graphs that admit a partition of the vertex set into open neighborhoods was initiated later in 1993 by Cockayne \emph{et al.} in \cite{CoHaHe}, where such partition were investigated under the name of total perfect codes. The terminology of efficient open domination graphs was used for the first time by Gavlas and Schultz in 2002 (see \cite{GaSc}). Cayley graphs were studied with respect to this property in~\cite{Tham}, grid graphs were investigated in \cite{CoHeKe, Dej, KlGo} and direct product graphs with such a partition were characterized in~\cite{AbHamTay}. For several other graph products see~\cite{KuPeYe1}, and in particular, for the Cartesian product see~\cite{KPRT}.

For the case of digraphs one needs to consider that, beside open and closed neighborhoods, in addition one would need to separate them into ``out-neighborhoods'' and ``in-neighborhoods'' (in connection with the direction of arrows). However, this additional separation would be ``somehow artificial'' as the one is closely related with the other in the following sense. If we reverse the orientation of every arc, then out-neighborhoods became in-neighborhoods and vice versa. Therefore, we can stay by efficient closed and efficient open domination in digraphs.


Efficient closed domination in digraphs has been introduced in \cite{barkauskas} (according to \cite{schwenk}), and also studied in some works like for instance \cite{huang,martinez,niepel}. On the other hand, investigations on efficient open domination digraphs has been initiated in \cite{schaudt} under the name of efficient total domination digraphs. As in the case of graphs, the literature concerning efficient closed domination digraphs is larger than that on efficient open domination digraphs. Some other results on efficient open domination digraphs can be found in \cite{sohn}.

The theory of efficient closed domination in digraphs (partitions with respect to closed out-neighborhoods) has been moreover studied in other ``not exactly directly'' style. That is, via efficient absorbant digraphs (partitions with respect to closed in-neighborhoods). As mentioned, these two concepts are interlaced and can be considered as one problem. All published works on efficient absorbants are connected with de Bruijn digraphs. The question about characterization of efficient absorbants among De Bruijn's digraphs was posed by Wang \emph{et al.} in \cite{WaWu}. The sufficient condition for it was shown by Wu \emph{et al.} in \cite{WuWa}, with the efficient absorbants. The characterization was then confirmed by Shiau \emph{et al.} in \cite{ShSh}.

Several graph products have been investigated in the last few decades and a
rich theory involving the structure and recognition of classes of these
graphs has emerged, cf. \cite{ImKl}. The most studied graph products are the
Cartesian product, the strong product, the direct product and the
lexicographic product which are also called \emph{standard products}. One
standard approach to graph products is to deduce properties of a product
with respect to (usually the same) properties of its factors. See a short collection
of these type involving efficient closed or open domination in
\cite{abay-2009,DoGrSp,Grav,JeKlSp,KlSpZe,KPRT,KuPeYe1,mollard-2011,taylor-2009,Zero}.

In the next section we fix the terminology. Following section deals with some partial results on efficient closed domination of Cartesian products of digraphs. We fix one factor and explain the structure of the other (similar approach was done for efficient open domination of Cartesian products of graphs in \cite{KPRT}). We continue with a section on the efficient closed domination of direct products of some families. In particular we treat cycles, which posed a problem among graphs, see \cite{JeKlSp,KlSpZe,Zero}. We finish with efficient close dominated digraphs among strong product and lexicographic product of digraphs, where a characterizations of all such products is presented with respect to some properties of factors.

\section{Preliminaries}


Let $D$ be a digraph with vertex set $V(D)$ and arc set $A(D)$.
For any two vertices $u,v\in V(D)$, we write $(u,v)$ as the \emph{arc} with direction or orientation from $u$ to $v$, and say $u$ is \emph{adjacent to} $v$, or $v$ is \emph{adjacent from} $u$. For an arc $(u,v)$ we also say that $u$ is the \emph{in-neighbor} of $v$ and that $v$ is the \emph{out-neighbor} of $u$. For a vertex $v\in V(D)$, the {\em open out-neighborhood} of $v$ ({\em open in-neighborhood} of $v$) is $N_D^{+}(v)=\{u\in S:(v,u)\in A(D)\}$ ($N_D^{-}(v)=\{u\in V(D):(u,v)\in A(D)\}$). The \emph{in-degree} of $v$ is $\delta_D^-(v)=|N_D^{-}(v)|$, the \emph{out-degree} of $v$ is $\delta_D^+(v)=|N_D^{+}(v)|$ and the degree of $v$ is $\delta_D(v)=\delta_D^-(v)+\delta_D^+(v)$. Moreover, $N_D^{-}[v]=N_D^{-}(v)\cup\{v\}$ is the {\em closed in-neighborhood} of $v$ ($N_D^{+}[v]=N_D^{+}(v)\cup\{v\}$ is the {\em closed out-neighborhood} of $v$). In above notation we omit $D$ if there is no ambiguity with respect to the digraph $D$. We similarly proceed with any other notation which uses such style of subscripts.

A vertex $v$ of $D$ with $\delta^+(v)=|V(D)|-1$ is called an \emph{out-universal vertex}, and if $\delta^-(v)=|V(D)|-1$, then $v$ is called an \emph{in-universal vertex}. A vertex $v$ of $D$ with $\delta^+(v)=0$ is called a \emph{sink}, and if $\delta^-(v)=0$, then $v$ is called a \emph{source}. If $\delta(v)=0$, then $v$ is an \emph{isolated vertex} or \emph{singleton}. An arc of the form $(v,v)$ is called a \emph{loop} and can be considered as a directed cycle of length one. A vertex $v$ with $\delta(v)=1$ is called a \emph{leaf} and is either a sink (if $\delta^+(v)=0$) or a source (if $\delta^-(v)=0$). Clearly, any vertex $u$ with $\delta(u)=2$ is either a sink, or a source, or $\delta^-(u)=1=\delta^+(u)$.
In a directed cycle $C_k$ all vertices are of degree two. It is easy to see that the number of sinks is the same as the number of sources in every directed cycle. Therefore we will denote a directed cycle on $k$ vertices with $p$ sources (and with $p$ sinks) as $C_k^p$.

The \emph{underlying graph} of a digraph $D$ is a graph $G_D$ with $V(G_D)=V(D)$ and for every arc $(u,v)$ from $D$ we have an edge $uv$ in $E(G_D)$. For those cases in which double arcs exist, only one edge is considered in the underlying graph. For a digraph $D$, we also define the digraph $D^-$ on the same set of vertices where $(u,v)\in A(D^-)$ whenever $(v,u)\in A(D)$. In other words, if we change the orientation of every arc in $D$, then we obtain $D^-$.

We use the notation $D[S]$ for the subdigraph of a digraph $D$ induced by the vertices of $S\subseteq V(D)$. Different kinds of connectivity are known for digraphs. Here we are interested in only one. That is, we say that a digraph $D$ is \emph{connected} if its underlying graph $G_D$ is connected. Clearly, the \emph{components} of $D$ are then the same as components of $G_D$.

\subsection{Domination in digraphs}

Let $D$ be a digraph and let $S\subseteq V(D)$. The set $S$ is called a \emph{dominating set} of $D$ if the closed out-neighborhoods centered in vertices of $S$ cover $V(D)$, that is $V(D)=\bigcup_{v\in S}N^+_D[v]$. Let $Q,R\subset V(D)$. If the vertices of $Q$ cover $R$, that is $R\subseteq \bigcup_{v\in Q}N^+_D[v]$, then we will use the symbol $Q\rightarrow R$. On the contrary, if every vertex from $R$ is not adjacent from any vertex of $Q$, then we write $Q\nrightarrow R$ or even $Q\nleftrightarrow R$, if there are no arcs between vertices from $R$ and $Q$ in any direction.

Let $S$ be a dominating set of $D$. If $N^+_D[v]\cap N^+_D[u]=\emptyset$ for every two different vertices $u,v\in S$, then the set $\{N^+_D[v]: v\in S\}$ not only cover $V(D)$ but also \emph{partition} $V(D)$. In this case we say that $S$ is an \emph{efficient closed dominating set} of $D$ (or ECD set for short) of $D$. If there exists an ECD set $S$ for the digraph $D$, then $D$ is called an \emph{efficient closed domination digraph} (or ECD digraph for short).

The minimum cardinality of a dominating set of $D$ is called the \emph{domination number} of $D$ and is denoted by $\gamma (D)$. If $S$ is a dominating set of cardinality $\gamma (D)$, then we say that $S$ is a $\gamma (D)$-set. The connection between ECD sets (whenever they exist) and the domination number of graphs was presented in several publications independently. This connection also remains in the case of digraphs as next shown.

\begin{proposition}
\label{ECD-total}If $D$ is an ECD digraph with an ECD set $S$, then $\gamma (D)=|S|$.
\end{proposition}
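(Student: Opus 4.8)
The plan is to prove the equality by establishing the two inequalities $\gamma(D)\le |S|$ and $\gamma(D)\ge |S|$ separately. The inequality $\gamma(D)\le |S|$ is immediate, since an ECD set is in particular a dominating set. All the work therefore goes into the reverse inequality, and for that I would fix an arbitrary dominating set $T$ of $D$ and prove $|T|\ge |S|$; specializing to a $\gamma(D)$-set $T$ then gives $\gamma(D)\ge |S|$ and hence the equality. This mirrors the classical fact that a perfect code in a graph has minimum cardinality among its dominating sets.

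To compare $|T|$ and $|S|$, the idea is to produce an injective map $\varphi\colon S\to T$. For each $v\in S$, the set $T$ being dominating provides a vertex $t\in T$ with $v\in N^+_D[t]$ (that is, $t=v$ or $(t,v)\in A(D)$); fix one such $t$ and set $\varphi(v)=t$. If $\varphi$ is injective we are done, since then $|S|\le |T|$.

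Proving injectivity is the heart of the matter, and this is where the efficiency of $S$ enters. I would first record two consequences of the fact that the closed out-neighborhoods $\{N^+_D[u]:u\in S\}$ are pairwise disjoint: (i) there is no arc between two distinct vertices of $S$, since the head of such an arc would lie in two of these sets; and (ii) no vertex of $D$ is an out-neighbor of two distinct vertices of $S$, for the same reason. Now suppose, for contradiction, that $\varphi(u)=\varphi(v)=t$ with $u\ne v$ in $S$, so that $u,v\in N^+_D[t]$. The possibility $t\in\{u,v\}$ is ruled out quickly: it would force one of $u,v$ into $N^+_D[u]\cap N^+_D[v]$, contradicting disjointness together with (i). The remaining case, in which $t$ is a common in-neighbor of $u$ and $v$, is the step I expect to be the main obstacle: conditions (i) and (ii) alone do not obviously forbid it, so the efficiency hypothesis must be used more globally --- for instance by tracking how the parts of the partition $\{N^+_D[u]:u\in S\}$ are distributed among the vertices of $T$ and arguing that assigning a single vertex of $T$ to two parts is incompatible with the partition structure. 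Once injectivity is secured, the proposition follows as described.
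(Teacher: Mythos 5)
You stall at exactly the decisive point, and the stall is a genuine gap: your map $\varphi\colon S\to T$ is only injective if no vertex of $T$ dominates two distinct vertices of $S$, and while you correctly eliminate the cases $\varphi(u)=\varphi(v)\in\{u,v\}$ from the disjointness of the closed out-neighborhoods, you give no argument for the case in which some $t\in T\setminus S$ is a common in-neighbor of two distinct $u,v\in S$; you only express the hope that efficiency ``used more globally'' forbids it. It does not: the ECD hypothesis constrains the closed \emph{out}-neighborhoods of the vertices of $S$, whereas injectivity of $\varphi$ needs control of their \emph{in}-neighborhoods, and nothing in the partition $\{N^+_D[v]:v\in S\}$ prevents two vertices of $S$ from sharing an in-neighbor outside $S$.

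In fact the missing step cannot be supplied, because the inequality $\gamma(D)\geq|S|$ fails for digraphs. Take $V(D)=\{a,b,c,d\}$ with arcs $(c,a)$, $(c,b)$, $(d,c)$. Then $N^+_D[a]=\{a\}$, $N^+_D[b]=\{b\}$ and $N^+_D[d]=\{c,d\}$ partition $V(D)$, so $S=\{a,b,d\}$ is an ECD set (and one checks it is the only one, since $d$, and then $a$ and $b$, are forced into any ECD set); yet $\{c,d\}$ is a dominating set, so $\gamma(D)=2<3=|S|$. Here $c$ dominates both $a$ and $b$, which is precisely the configuration you could not exclude. For comparison, the paper's own proof asserts the needed distinctness of the dominators ``since $\bigcup_{v\in S}N_{D}^+[v]$ form a partition of $V(D)$'', which is the same insufficient justification you were trying to improve on: that argument is the classical one for undirected graphs, where $u\in N[v]$ if and only if $v\in N[u]$, and this symmetry is exactly what is lost in the directed setting. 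So your instinct about where the difficulty lies was sound, but the conclusion is that the statement requires an additional hypothesis (or a different formulation), not that a more global use of the partition will close the remaining case.
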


\begin{proof}
If $S$ is an ECD set of $D$, then $S$ is also a dominating set of $D$ and $\gamma (D)\leq |S|$
follows. On the other hand, an arbitrary vertex of $S$ has at least one neighbor in
every $\gamma (D)$-set $S'$, since $\bigcup _{v\in S'}N_{D}^+[v]=V(G)$.
Moreover, these neighbors must be different, since $\bigcup_{v\in S}N_{D}^+[v]$
form a partition of $V(D)$. Hence $\gamma(D)\geq |S|$ and the equality follows.
\end{proof}

In the definitions about domination, closed out-neighborhoods play an important role. What if we replace them by closed in-neighborhoods? Let $D$ be a digraph and $R\subseteq V(D)$. The set $R$ is called an \emph{absorbing set} of $D$ if the closed in-neighborhoods centered in vertices of $R$ cover $V(D)$, that is $V(D)=\bigcup_{v\in R}N^-_D[v]$.  Let $R$ be an absorbing set of $D$. If $N^-_D[v]\cap N^-_D[u]=\emptyset$ for every different vertices $u,v\in R$, then the set $\{N^-_D[v]: v\in R\}$ not only cover $V(D)$ but also \emph{partition} $V(D)$. In this case we say that $R$ is an \emph{efficient closed absorbing set} of $D$ (or ECA set for short). If there exists an ECA set $R$ for $D$, then $D$ is called an \emph{efficient closed absorbant digraph} (or ECA digraph for short). The minimum cardinality of an absorbing set of $D$ is called the \emph{absorbing number} of $D$ and is denoted by $\gamma_a (D)$.

It is not hard to find examples of digraphs where $\gamma(D)$ and $\gamma_a(D)$ are different. For instance, observe $K_{1,t}$, where the orientation is such that the central vertex is a source. Clearly, $\gamma (D)=1$ and $\gamma_a(D)=t$ and the difference can be arbitrary. On the other hand, there is a very strong, and also obvious, connection between dominating sets and absorbing sets as stated in the following observation. This is also the reason that with the study of ECD digraphs we also contribute to ECA digraphs (and vice versa). Therefore, we only deal in this work with ECD digraphs.

\begin{observation}
\label{ECD-ABS}A set $S$ is a dominating set of a digraph $D$ if and only if $S$ is an absorbing set of $D^-$.
\end{observation}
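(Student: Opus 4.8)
The plan is to reduce everything to a single identity relating the closed out-neighborhood of a vertex $v$ in $D$ with its closed in-neighborhood in $D^-$, and then observe that the properties ``dominating set of $D$'' and ``absorbing set of $D^-$'' are both merely assertions that a union of such neighborhoods covers the common vertex set $V(D)=V(D^-)$.

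First I would fix an arbitrary vertex $v\in V(D)=V(D^-)$ and check that $N^+_D[v]=N^-_{D^-}[v]$. Indeed, $u\in N^-_{D^-}[v]$ if and only if $u=v$ or $(u,v)\in A(D^-)$; by the definition of $D^-$, the arc $(u,v)$ belongs to $A(D^-)$ exactly when $(v,u)\in A(D)$, that is, exactly when $u\in N^+_D(v)$. Hence $N^-_{D^-}[v]=\{v\}\cup N^+_D(v)=N^+_D[v]$.

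Next I would simply unwind the two definitions recalled earlier in the preliminaries. The set $S$ is a dominating set of $D$ precisely when $V(D)=\bigcup_{v\in S}N^+_D[v]$, while $S$ is an absorbing set of $D^-$ precisely when $V(D^-)=\bigcup_{v\in S}N^-_{D^-}[v]$. Since $V(D)=V(D^-)$ and, by the identity established above, the two unions agree term by term, the two conditions hold simultaneously; this gives the claimed equivalence.

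I do not anticipate a genuine obstacle: the argument is purely definitional. The only point worth a moment's care is the treatment of loops $(v,v)$, but since $v$ belongs to both $N^+_D[v]$ and $N^-_{D^-}[v]$ by the very definition of closed neighborhoods, loops are already accounted for and cause no difficulty.
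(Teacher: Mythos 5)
Your argument is correct and is exactly the definitional unwinding the paper relies on: the paper states this observation without proof, treating it as obvious, and your identity $N^+_D[v]=N^-_{D^-}[v]$ followed by taking unions over $S$ is precisely that obvious argument made explicit. Nothing further is needed.
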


It is also an easy task to construct an ECD digraph $D$ if its underlying graph $G_D$ is given as well as a maximum independent set $S$ of $G_D$. For $u\in S$ and $v\in N_{G_D}(u)$ we set $(u,v)$ to be an arc whenever $v$ has no in-neighbor from $S$. Otherwise, when there already exists an in-neighbor of $v$ from $S$, we set $(v,u)$ to be an arc of $D$. In addition we orient all other edges (both end-vertices outside of $S$) arbitrary. Clearly, this procedure gives an ECD digraph $D$.

\subsection{Products of digraphs}

Let $D$ and $F$ be digraphs. Different products of digraphs $D$ and $F$ have, similar as in graphs, the set $V(D)\times V(F)$ for the set of vertices. We roughly and briefly discuss the four standard products of digraphs: the \emph{Cartesian product} $D\Box F$, the \emph{direct product} $D\times F$ , the \emph{strong product} $D\boxtimes F$ and the \emph{lexicographic product} $D\circ F$ (sometimes also denoted $D[F]$). Adjacency in different products is defined as follows.
\begin{itemize}
\item In the Cartesian product $D\Box F$ of digraphs $D$ and $F$ there exists an arc from vertex $(d,f)$ to vertex $(d',f')$ if there exists an arc from $d$ to $d'$ in $D$ and $f=f'$ or $d=d'$ and there exists an arc from $f$ to $f'$ in $F$.
\item If there is an arc from $d$ to $d'$ in $D$ and an arc from $f$ to $f'$ in $F$, then there exists an arc from $(d,f)$ to $(f',d')$ in the direct product  $D\times F$.
\item In the strong product we have $((d,f),(d',f'))\in A(D\boxtimes F)$ if ($(d,d')\in A(D)$ and $f=f'$) or ($d=d'$ and $(f,f')\in A(F)$) or ($(d,d')\in A(D)$ and $(f,f')\in A(F)$).
\item There is an arc in the lexicographic product $D\circ F$ from a vertex $(d,f)$ to a vertex $(d',f')$, whenever $(d,d')\in A(D)$ or ($d=d'$ and $(f,f')\in A(F)$).
\end{itemize}
Some examples of the above mentioned products appear in Figure \ref{Product-of-digraphs}.

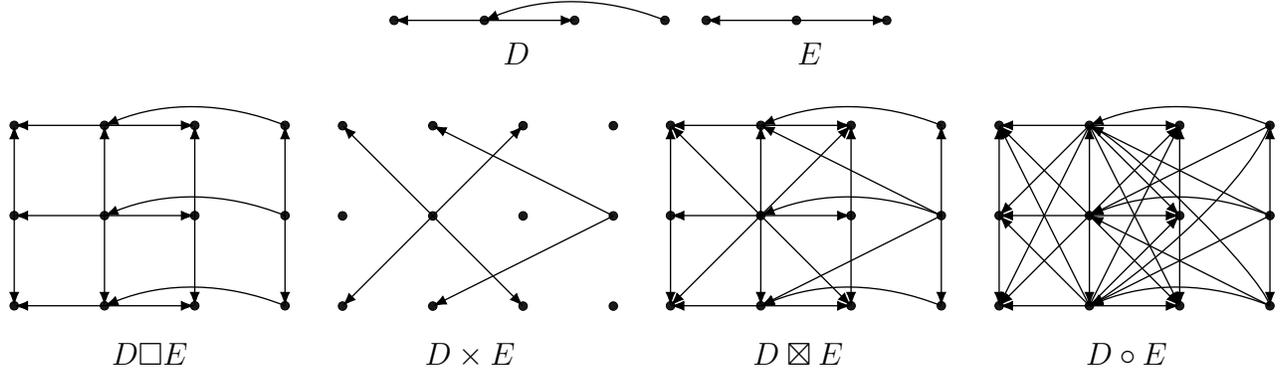
\begin{figure}[h]
\centering
\begin{tabular}{cc}
\begin{tikzpicture}[scale=.8, transform shape]
\draw[-Latex,line width=0.5pt](1.5,0)--(0,0);
\draw[-Latex,line width=0.5pt](1.5,0)--(3,0);
\draw[-Latex,line width=0.5pt] (4.5,0) .. controls (3.5,0.4) and (2.5,0.4).. (1.5,0);

\filldraw[fill opacity=0.9,fill=black]  (0,0) circle (0.07cm);
\filldraw[fill opacity=0.9,fill=black]  (1.5,0) circle (0.07cm);
\filldraw[fill opacity=0.9,fill=black]  (3,0) circle (0.07cm);
\filldraw[fill opacity=0.9,fill=black]  (4.5,0) circle (0.07cm);
\end{tikzpicture}&
\begin{tikzpicture}[scale=.8, transform shape]
\draw[-Latex,line width=0.5pt](1.5,0)--(0,0);
\draw[-Latex,line width=0.5pt](1.5,0)--(3,0);

\filldraw[fill opacity=0.9,fill=black]  (0,0) circle (0.07cm);
\filldraw[fill opacity=0.9,fill=black]  (1.5,0) circle (0.07cm);
\filldraw[fill opacity=0.9,fill=black]  (3,0) circle (0.07cm);
\end{tikzpicture}\\
$D\;\;\;$ & $\;\;\;E$
\end{tabular}

\vspace*{0.3cm}
\begin{tabular}{cccc}

\begin{tikzpicture}[scale=.8, transform shape]
\draw[-Latex,line width=0.5pt](0,1.5)--(0,3);
\draw[-Latex,line width=0.5pt](0,1.5)--(0,0);
\draw[-Latex,line width=0.5pt](1.5,1.5)--(1.5,3);
\draw[-Latex,line width=0.5pt](1.5,1.5)--(1.5,0);
\draw[-Latex,line width=0.5pt](3,1.5)--(3,3);
\draw[-Latex,line width=0.5pt](3,1.5)--(3,0);
\draw[-Latex,line width=0.5pt](4.5,1.5)--(4.5,3);
\draw[-Latex,line width=0.5pt](4.5,1.5)--(4.5,0);
\draw[-Latex,line width=0.5pt](1.5,0)--(0,0);
\draw[-Latex,line width=0.5pt](1.5,0)--(3,0);
\draw[-Latex,line width=0.5pt] (4.5,0) .. controls (3.5,0.4) and (2.5,0.4).. (1.5,0);
\draw[-Latex,line width=0.5pt](1.5,1.5)--(0,1.5);
\draw[-Latex,line width=0.5pt](1.5,1.5)--(3,1.5);
\draw[-Latex,line width=0.5pt] (4.5,1.5) .. controls (3.5,1.9) and (2.5,1.9).. (1.5,1.5);
\draw[-Latex,line width=0.5pt](1.5,3)--(0,3);
\draw[-Latex,line width=0.5pt](1.5,3)--(3,3);
\draw[-Latex,line width=0.5pt] (4.5,3) .. controls (3.5,3.4) and (2.5,3.4).. (1.5,3);

\filldraw[fill opacity=0.9,fill=black]  (0,0) circle (0.07cm);
\filldraw[fill opacity=0.9,fill=black]  (0,1.5) circle (0.07cm);
\filldraw[fill opacity=0.9,fill=black]  (0,3) circle (0.07cm);
\filldraw[fill opacity=0.9,fill=black]  (1.5,0) circle (0.07cm);
\filldraw[fill opacity=0.9,fill=black]  (1.5,1.5) circle (0.07cm);
\filldraw[fill opacity=0.9,fill=black]  (1.5,3) circle (0.07cm);
\filldraw[fill opacity=0.9,fill=black]  (3,0) circle (0.07cm);
\filldraw[fill opacity=0.9,fill=black]  (3,1.5) circle (0.07cm);
\filldraw[fill opacity=0.9,fill=black]  (3,3) circle (0.07cm);
\filldraw[fill opacity=0.9,fill=black]  (4.5,0) circle (0.07cm);
\filldraw[fill opacity=0.9,fill=black]  (4.5,1.5) circle (0.07cm);
\filldraw[fill opacity=0.9,fill=black]  (4.5,3) circle (0.07cm);

\end{tikzpicture}&
$\,$
\begin{tikzpicture}[scale=.8, transform shape]
\draw[-Latex,line width=0.5pt](1.5,1.5)--(0,0);
\draw[-Latex,line width=0.5pt](1.5,1.5)--(0,3);
\draw[-Latex,line width=0.5pt](1.5,1.5)--(3,0);
\draw[-Latex,line width=0.5pt](1.5,1.5)--(3,3);
\draw[-Latex,line width=0.5pt](4.5,1.5)--(1.5,3);
\draw[-Latex,line width=0.5pt](4.5,1.5)--(1.5,0);

\filldraw[fill opacity=0.9,fill=black]  (0,0) circle (0.07cm);
\filldraw[fill opacity=0.9,fill=black]  (0,1.5) circle (0.07cm);
\filldraw[fill opacity=0.9,fill=black]  (0,3) circle (0.07cm);
\filldraw[fill opacity=0.9,fill=black]  (1.5,0) circle (0.07cm);
\filldraw[fill opacity=0.9,fill=black]  (1.5,1.5) circle (0.07cm);
\filldraw[fill opacity=0.9,fill=black]  (1.5,3) circle (0.07cm);
\filldraw[fill opacity=0.9,fill=black]  (3,0) circle (0.07cm);
\filldraw[fill opacity=0.9,fill=black]  (3,1.5) circle (0.07cm);
\filldraw[fill opacity=0.9,fill=black]  (3,3) circle (0.07cm);
\filldraw[fill opacity=0.9,fill=black]  (4.5,0) circle (0.07cm);
\filldraw[fill opacity=0.9,fill=black]  (4.5,1.5) circle (0.07cm);
\filldraw[fill opacity=0.9,fill=black]  (4.5,3) circle (0.07cm);

\end{tikzpicture}&
$\,$
\begin{tikzpicture}[scale=.8, transform shape]
\draw[-Latex,line width=0.5pt](0,1.5)--(0,3);
\draw[-Latex,line width=0.5pt](0,1.5)--(0,0);
\draw[-Latex,line width=0.5pt](1.5,1.5)--(1.5,3);
\draw[-Latex,line width=0.5pt](1.5,1.5)--(1.5,0);
\draw[-Latex,line width=0.5pt](3,1.5)--(3,3);
\draw[-Latex,line width=0.5pt](3,1.5)--(3,0);
\draw[-Latex,line width=0.5pt](4.5,1.5)--(4.5,3);
\draw[-Latex,line width=0.5pt](4.5,1.5)--(4.5,0);
\draw[-Latex,line width=0.5pt](1.5,0)--(0,0);
\draw[-Latex,line width=0.5pt](1.5,0)--(3,0);
\draw[-Latex,line width=0.5pt] (4.5,0) .. controls (3.5,0.4) and (2.5,0.4).. (1.5,0);
\draw[-Latex,line width=0.5pt](1.5,1.5)--(0,1.5);
\draw[-Latex,line width=0.5pt](1.5,1.5)--(3,1.5);
\draw[-Latex,line width=0.5pt] (4.5,1.5) .. controls (3.5,1.9) and (2.5,1.9).. (1.5,1.5);
\draw[-Latex,line width=0.5pt](1.5,3)--(0,3);
\draw[-Latex,line width=0.5pt](1.5,3)--(3,3);
\draw[-Latex,line width=0.5pt] (4.5,3) .. controls (3.5,3.4) and (2.5,3.4).. (1.5,3);

\draw[-Latex,line width=0.5pt](1.5,1.5)--(0,0);
\draw[-Latex,line width=0.5pt](1.5,1.5)--(0,3);
\draw[-Latex,line width=0.5pt](1.5,1.5)--(3,0);
\draw[-Latex,line width=0.5pt](1.5,1.5)--(3,3);
\draw[-Latex,line width=0.5pt](4.5,1.5)--(1.5,3);
\draw[-Latex,line width=0.5pt](4.5,1.5)--(1.5,0);

\filldraw[fill opacity=0.9,fill=black]  (0,0) circle (0.07cm);
\filldraw[fill opacity=0.9,fill=black]  (0,1.5) circle (0.07cm);
\filldraw[fill opacity=0.9,fill=black]  (0,3) circle (0.07cm);
\filldraw[fill opacity=0.9,fill=black]  (1.5,0) circle (0.07cm);
\filldraw[fill opacity=0.9,fill=black]  (1.5,1.5) circle (0.07cm);
\filldraw[fill opacity=0.9,fill=black]  (1.5,3) circle (0.07cm);
\filldraw[fill opacity=0.9,fill=black]  (3,0) circle (0.07cm);
\filldraw[fill opacity=0.9,fill=black]  (3,1.5) circle (0.07cm);
\filldraw[fill opacity=0.9,fill=black]  (3,3) circle (0.07cm);
\filldraw[fill opacity=0.9,fill=black]  (4.5,0) circle (0.07cm);
\filldraw[fill opacity=0.9,fill=black]  (4.5,1.5) circle (0.07cm);
\filldraw[fill opacity=0.9,fill=black]  (4.5,3) circle (0.07cm);

\end{tikzpicture}&
$\,$
\begin{tikzpicture}[scale=.8, transform shape]

\draw[-Latex,line width=0.5pt](0,1.5)--(0,3);
\draw[-Latex,line width=0.5pt](0,1.5)--(0,0);
\draw[-Latex,line width=0.5pt](1.5,1.5)--(1.5,3);
\draw[-Latex,line width=0.5pt](1.5,1.5)--(1.5,0);
\draw[-Latex,line width=0.5pt](3,1.5)--(3,3);
\draw[-Latex,line width=0.5pt](3,1.5)--(3,0);
\draw[-Latex,line width=0.5pt](4.5,1.5)--(4.5,3);
\draw[-Latex,line width=0.5pt](4.5,1.5)--(4.5,0);
\draw[-Latex,line width=0.5pt](1.5,0)--(0,0);
\draw[-Latex,line width=0.5pt](1.5,0)--(3,0);
\draw[-Latex,line width=0.5pt] (4.5,0) .. controls (3.5,0.4) and (2.5,0.4).. (1.5,0);
\draw[-Latex,line width=0.5pt](1.5,1.5)--(0,1.5);
\draw[-Latex,line width=0.5pt](1.5,1.5)--(3,1.5);
\draw[-Latex,line width=0.5pt] (4.5,1.5) .. controls (3.5,1.9) and (2.5,1.9).. (1.5,1.5);
\draw[-Latex,line width=0.5pt](1.5,3)--(0,3);
\draw[-Latex,line width=0.5pt](1.5,3)--(3,3);
\draw[-Latex,line width=0.5pt] (4.5,3) .. controls (3.5,3.4) and (2.5,3.4).. (1.5,3);

\draw[-Latex,line width=0.5pt](1.5,1.5)--(0,0);
\draw[-Latex,line width=0.5pt](1.5,1.5)--(0,3);
\draw[-Latex,line width=0.5pt](1.5,1.5)--(3,0);
\draw[-Latex,line width=0.5pt](1.5,1.5)--(3,3);
\draw[-Latex,line width=0.5pt](1.5,0)--(0,1.5);
\draw[-Latex,line width=0.5pt](1.5,0)--(0,3);
\draw[-Latex,line width=0.5pt](1.5,0)--(3,1.5);
\draw[-Latex,line width=0.5pt](1.5,0)--(3,3);
\draw[-Latex,line width=0.5pt](1.5,3)--(0,0);
\draw[-Latex,line width=0.5pt](1.5,3)--(0,1.5);
\draw[-Latex,line width=0.5pt](1.5,3)--(3,0);
\draw[-Latex,line width=0.5pt](1.5,3)--(3,1.5);

\draw[-Latex,line width=0.5pt](4.5,1.5)--(1.5,3);
\draw[-Latex,line width=0.5pt](4.5,1.5)--(1.5,0);
\draw[-Latex,line width=0.5pt](4.5,3)--(1.5,1.5);
\draw[-Latex,line width=0.5pt](4.5,3).. controls (4,1.8) and (2,0.3)..(1.5,0);
\draw[-Latex,line width=0.5pt](4.5,0)--(1.5,1.5);
\draw[-Latex,line width=0.5pt](4.5,0).. controls (4.3,0.6) and (2.2,2.6)..(1.5,3);

\filldraw[fill opacity=0.9,fill=black]  (0,0) circle (0.07cm);
\filldraw[fill opacity=0.9,fill=black]  (0,1.5) circle (0.07cm);
\filldraw[fill opacity=0.9,fill=black]  (0,3) circle (0.07cm);
\filldraw[fill opacity=0.9,fill=black]  (1.5,0) circle (0.07cm);
\filldraw[fill opacity=0.9,fill=black]  (1.5,1.5) circle (0.07cm);
\filldraw[fill opacity=0.9,fill=black]  (1.5,3) circle (0.07cm);
\filldraw[fill opacity=0.9,fill=black]  (3,0) circle (0.07cm);
\filldraw[fill opacity=0.9,fill=black]  (3,1.5) circle (0.07cm);
\filldraw[fill opacity=0.9,fill=black]  (3,3) circle (0.07cm);
\filldraw[fill opacity=0.9,fill=black]  (4.5,0) circle (0.07cm);
\filldraw[fill opacity=0.9,fill=black]  (4.5,1.5) circle (0.07cm);
\filldraw[fill opacity=0.9,fill=black]  (4.5,3) circle (0.07cm);

\end{tikzpicture}\\[0.2cm]

$D\Box E$&$D\times E$&$D\boxtimes E$&$D\circ E$
\end{tabular}
\caption{The digraphs $D$ and $E$, and their Cartesian, direct, strong and lexicographic products.}
\label{Product-of-digraphs}
\end{figure}

First three mentioned products are clearly commutative, while lexicographic product is not. On the other hand, all four of them are associative (see Chapter 32 from \cite{ImKl} - also for additional information about products of digraphs).

We next use the symbol $*$ for any of the four standard products $\{\Box, \times, \boxtimes, \circ \}$. For a fixed
$f\in V(F)$ we call $D^{f}=\{(d,f)\in V(D*F):d\in V(D)\}$ a $D$-\emph{layer through} $f$
in $D*F$. Symmetrically, an $F$-layer $^{d}\!F$ through $d$ is defined for a fixed $d\in V(D)$. Notice that the subdigraph of $D*F$ induced by a $D$-layer or by an $F$-layer is isomorphic to $D$ or to $F$, respectively, for the Cartesian product, for the strong product and for the lexicographic product. In the case of direct products loops play an important role. The subdigraph induced by the layer $D^{f}$, or by the layer $^{d}\!F$, is an empty digraph on $|V(D)|$ vertices or $|V(F)|$ vertices, respectively, if there is no loop in $f$ and in $d$, respectively. If we have $(d,d)\in A(D)$ and $(f,f)\in A(F)$, then $F[^d\!F]$ and $D[D^f]$, respectively, are isomorphic to $F$ and $D$, respectively.

The map $p_{D}:V(D*F)\rightarrow V(D)$ defined by $p_{D}((d,f))=d$ is called a \emph{projection map onto} $D$. Similarly, we define
$p_{F}$ as the \emph{projection map onto} $F$. Projections are defined as
maps between vertices, but frequently it is more convenient to see them like maps between digraphs. In this case we observe the subdigraphs induced by
$B\subseteq V(D\circ F)$ and $p_{X}(B)$ for $X\in \{D,F\}$. Notice that in Cartesian and strong product the arcs project either to arcs (with the same orientation) or to a vertex. In the case of direct product arcs always project to arcs (with the same orientation). The projection $p_D$ maps arcs into arcs (with the same orientation) or vertices in lexicographic product $D\circ F$, while in the same product arcs are mapped with $p_F$ into vertices, into arcs with the same orientation, into arcs with different orientation or into two vertices which do not form an arc in $F$.

It is easy to see that closed out-neighborhoods in strong product of digraphs satisfy that

\begin{equation}
 N_{D\boxtimes F}^+[(d,f)]=N_{D}^+[d]\times N_{F}^+[f] \label{strongneig}
\end{equation}
and for lexicographic product of digraphs we have
\begin{equation}
 N_{D\circ F}^+[(d,f)]=N_{D}^+(d)\times V(F) \cup \{d\}\times N_{F}^+[f]. \label{lexneig}
\end{equation}
This two facts are the reason that we can present a complete characterization of ECD digraphs among strong and lexicographic products with the properties of factors in the last section.

For the direct product of digraphs we will use the following property on out- and in-degrees
\begin{equation}
 \delta^+_{D\times F}((d,f))=\delta_D^+(d)\delta_{F}^+(f)\;\; {\rm and }\;\; \delta^-_{D\times F}((d,f))=\delta_D^-(d)\delta_{F}^-(f). \label{dirdeg}
\end{equation}

With all the terminologies, notations and definitions described till this point, we are then able to present our results concerning ECD digraphs among Cartesian, direct, strong and lexicographic product of digraphs, which we do in next sections.



\section{Cartesian product}

In this section we completely describe all ECD digraphs among Cartesian products when one factor is either a cycle without sinks or a star with such an orientation that the central vertex is the source. This approach is partially inspired by \cite{KPRT} where similar approach was taken for efficient open domination Cartesian products graphs. With one fix factor it is possible to concentrate on the properties of the other factor to completely describe when the product is an ECD digraph. In order to cover all possible configurations of ECD digraphs among Cartesian product of digraphs whether one factor is a cycle, we need to describe three families of digraphs as follows.

Let $D'$ be an arbitrary digraph and let $\Pi_1=\{W_1,\ldots,W_p\}$ and $\Pi_2=\{Z_1,\ldots,Z_r\}$ be any two partitions of $V'=V(D')$. By using these partitions we define a new digraph $D_1$ as follows. Its vertex set is $V(D_1)=V'\cup W\cup Z$, where $W=\{w_1,\ldots,w_p\}$ and $Z=\{z_1,\ldots,z_r\}$ are two additional disjoint set of vertices. The arc set is
$$A(D_1)=A(D')\cup \{(w_i,v):v\in W_i,i\in\{1,\ldots,p\}\}\cup \{(z_i,v):v\in Z_i,i\in\{1,\ldots,r\}\}\cup B,$$
where $B$ is an arbitrary set of arcs from vertices in $V'$ to vertices in $W\cup Z$. We denote the family of digraphs that contains the trivial digraph on one vertex without a loop, and all possible digraphs $D_1$ obtained from an arbitrary starting digraph $D'$ by $\mathcal{D}_1$. 

We remark that any digraph $D_1\in \mathcal{D}_1$ can be alternatively described as a digraph where $W,Z$ and $V'$ partition $V(D_1)$ such that $W$ is an ECD set in the digraph induced by $W\cup V'$; $Z$ is an ECD set in the digraph induced by $Z\cup V'$; and there are no arcs in any direction between $W$ and $Z$. This means, among other facts, that $W\rightarrow V'$, $Z\rightarrow V'$ and $W\nleftrightarrow Z$.

Suppose now that we can partition a digraph $D$ into three sets $U_1,U_2$ and $U_3$ such that $U_1$ is an ECD set in $D[U_1\cup U_2]$, $U_2$ is an ECD set in $D[U_2\cup U_3]$, and $U_3$ is an ECD set in $D[U_1\cup U_3]$, and no other arc exist in $D$. Clearly, each set $U_1$, $U_2$, and $U_3$ induces a digraph without arcs as they are ECD sets for some digraph. Also, every vertex $v$ from $D$ has $\delta_D^-(v)=1$, which means that no isolated vertices exists in $D$. By $\mathcal{D}_2$, we denote the family of all possible digraphs with such described structure.

Finally, a digraph $D$ is in $\mathcal{D}_3$ if its vertex set can be partitioned into two subsets, such that the first one induces a digraph $D_1$ from $\mathcal{D}_1$, and the second induces a digraph $D_2$ from $\mathcal{D}_2$. In addition, there exists some arcs from $V'\subset V(D_1)$ to $V(D_2)$.

We are now ready to completely describe when the Cartesian product of any digraph $D$ with a directed cycle with no sources is an ECD digraph. While it is clear that $D\Box C_1^0$ is an ECD digraph if and only if $D$ is an ECD graph, we may restrict to the case when $k\geq 2$.

\begin{theorem}\label{ecdcartcycle}
Let $D$ be a digraph and let $k\geq 2$ be an integer. The Cartesian product $D\Box C_k^0$ is an ECD digraph if and only if for every component $C$ of $D$ we have
\begin{itemize}
\item[{\rm (i)}] $C\in \mathcal{D}_1$ and $k$ is even, or

\item[{\rm (ii)}] $C\in \mathcal{D}_2$ and $k$ is a multiple of 3, or

\item[{\rm (iii)}] $C\in \mathcal{D}_3$ and $k$ is a multiple of 6.
\end{itemize}
\end{theorem}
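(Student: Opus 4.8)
The idea is to eliminate the cycle coordinate: an ECD set $S$ of $D\Box C_k^0$ is described by its ``slices'' $S_i=\{d\in V(D):(d,i)\in S\}$, and the ECD condition becomes a system of set equations linking consecutive slices. Since $C_k^0$ is connected, the components of $D\Box C_k^0$ are exactly $C\Box C_k^0$ for the components $C$ of $D$, and a digraph is an ECD digraph iff each component is; so it suffices to treat the case $D$ connected. Using that the arcs of $C_k^0$ are $(i,i+1)$, so $N^+_{C_k^0}[i]=\{i,i+1\}$, we get $N^+_{D\Box C_k^0}[(d,i)]=\bigl(N^+_D[d]\times\{i\}\bigr)\cup\{(d,i+1)\}$; intersecting with layer $j$ shows that $S$ is an ECD set of $D\Box C_k^0$ if and only if for every $i\in\mathbb{Z}_k$ the family $\{S_{i-1}\}\cup\{N^+_D[d]:d\in S_i\}$ partitions $V(D)$. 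Equivalently, for all $v\in V(D)$ and $i$, $\mathbb{1}[v\in S_{i-1}]+|N^-_D[v]\cap S_i|=1$; equivalently again, $S_i$ is an ECD set of $D[V(D)\setminus S_{i-1}]$ with $N^+_D[d]\cap S_{i-1}=\emptyset$ for each $d\in S_i$. I will call this the \emph{slice equation}.

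\textbf{Periodicity and the ``if'' direction.} From the slice equation $S_{i-1}=\{v\in V(D):N^-_D[v]\cap S_i=\emptyset\}$ is determined by $S_i$, so, reading indices cyclically, $(S_i)_{i\in\mathbb{Z}_k}$ is purely periodic of some minimal period $p\mid k$; moreover each $S_i$ induces no arcs and $S_i\cap S_{i-1}=\emptyset$. For the ``if'' direction I exhibit the periodic pattern componentwise: if $C\in\mathcal{D}_1$ with partition $W\cup Z\cup V'$, let $S_i$ alternate between $W$ and $Z$ (valid by the defining ECD conditions of $\mathcal{D}_1$ and $W\nleftrightarrow Z$, closes up iff $k$ even); if $C\in\mathcal{D}_2$ with partition $U_1\cup U_2\cup U_3$, let $S_i$ cycle through $U_1,U_2,U_3$ (closes up iff $3\mid k$); if $C\in\mathcal{D}_3$, superpose the two, so that $S_i$ runs through $W\cup U_1,\ Z\cup U_2,\ W\cup U_3,\ Z\cup U_1,\ W\cup U_2,\ Z\cup U_3$, and check that the slice equation splits into the $\mathcal{D}_1$-equations on $V(D_1)$ and the $\mathcal{D}_2$-equations on $V(D_2)$ — here one uses that every cross arc leaves $V'\subseteq V(D_1)$, and $V'$ meets no $S_i$ — so it closes up iff $6\mid k$. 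Since the product and the ECD property both decompose over components, this gives the ``if'' direction.

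\textbf{The ``only if'' direction.} Assume $D$ connected and $(S_i)$ a valid slice sequence. Put $B=\{v:v\notin S_i\ \forall i\}$, $A=V(D)\setminus B$; restricting the slice equation to $A$ gives a valid slice sequence for $D[A]$ with every vertex in some slice, while each $v\in B$ has $|N^-_D(v)\cap S_i|=1$ for all $i$. Writing $I_v=\{i:v\in S_i\}$ and summing $\mathbb{1}[v\in S_{i-1}]+|N^-_D[v]\cap S_i|=1$ over $i$ yields the key identity
$$2|I_v|+\sum_{u\in N^-_D(v)}|I_u|=k\qquad(v\in V(D)).$$
Hence every source of $D$ has $|I_v|=k/2$ (so $D$ has a source $\Rightarrow k$ even), with a $2$-periodic membership pattern. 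The central step is: every vertex's membership pattern has period $1$, $2$, or $3$. Using the key identity together with the integrality constraint $|N^-_D(v)\cap S_i|\in\{0,1\}$, one shows that a component of $D[A]$ which is not a single period-$2$ vertex contains no vertex of density $|I_v|/k=1/2$, and then that all its densities equal $1/3$ and it satisfies exactly the ECD equations defining $\mathcal{D}_2$; for $B$ one analyses the in-neighbour densities likewise. Granting this, split $A$ into $W\cup Z$ (period-$2$ vertices, by parity of the containing slices) and $U_1\cup U_2\cup U_3$ (period-$3$ vertices, by residue mod $3$); translating the slice equations arc-by-arc shows $W\cup Z$ induces no arcs, the $U_j$ together with their coverings form a $\mathcal{D}_2$-structure, and every remaining arc is incident with $B$. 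If there are no period-$3$ vertices this identifies $D$ as a member of $\mathcal{D}_1$ with $V'=B$ and $k$ even; if there are no period-$2$ vertices (and $B=\emptyset$), as a member of $\mathcal{D}_2$ with $3\mid k$; in the remaining case as a member of $\mathcal{D}_3$ with $k$ divisible by $2$ and by $3$, hence by $6$.

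\textbf{Main obstacle.} The equivalence ``$S$ is ECD $\Leftrightarrow$ slice equations'' and the ``if'' direction are routine. The genuine difficulty is the period bound in the ``only if'' direction: since $D$ may contain directed cycles there is no acyclic induction by ``distance from the sources'', so periods $>3$ must be excluded globally from $2|I_v|+\sum_{u\in N^-_D(v)}|I_u|=k$ and the constraint $|N^-_D(v)\cap S_i|\in\{0,1\}$. A secondary, bookkeeping-heavy point is to confirm that the decomposition of $V(D)$ delivered by the period analysis meets every clause of the definitions of $\mathcal{D}_1$, $\mathcal{D}_2$, $\mathcal{D}_3$ — in particular the location of all cross arcs relative to $V'$ in $\mathcal{D}_3$, and the separate treatment of the trivial one-vertex digraph.
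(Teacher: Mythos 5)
Your slice reformulation of the ECD condition and your ``if'' direction are sound and essentially reproduce the paper's construction (the alternating $W/Z$ pattern, the $U_1,U_2,U_3$ cycle, and their superposition for $\mathcal{D}_3$). The gap is in the ``only if'' direction, exactly at the step you yourself call central. The key identity $2|I_v|+\sum_{u\in N^-_D(v)}|I_u|=k$ is a single linear relation per vertex and is far too weak to force the densities $0$, $1/3$, $1/2$: for example, on the digraph with arcs $a\to b$, $b\to c$, $c\to d$, $d\to a$, $a\to c$, $c\to a$ the system admits the solution $d_a=d_c=1/5$, $d_b=d_d=2/5$. Hence the assertion ``one shows that all its densities equal $1/3$ and it satisfies exactly the ECD equations defining $\mathcal{D}_2$'' is not a proof sketch but a restatement of the hard direction of the theorem, for which you give no argument; the paper does this work by a layer-by-layer structural case analysis (the three cases $Z_k\subseteq V'\times\{v_k\}$, $Z_k\subseteq W\times\{v_k\}$, and mixed), not by counting.

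Moreover, the endgame you dismiss as bookkeeping cannot be carried out as stated, because a valid slice sequence need not produce the structure of $\mathcal{D}_1$, $\mathcal{D}_2$, $\mathcal{D}_3$ as these families are defined. Take $D$ with vertices $a,b,c,v$ and arcs $a\to b$, $b\to c$, $c\to a$, $a\to v$, $b\to v$, $c\to v$. Then $S=\{(a,1),(b,2),(c,3)\}$ is an ECD set of $D\Box C_3^0$: the three closed out-neighborhoods have size four each and partition the twelve vertices. The associated slice sequence gives $a,b,c$ density $1/3$ and $v$ density $0$, with the three in-neighbors of $v$ lying one in each period-$3$ class. This configuration satisfies all of your slice equations, yet $D$ lies in none of the families: its underlying graph is $K_4$, so there are no two nonadjacent vertices, which rules out $\mathcal{D}_1$ (nonempty $W,Z$ with $W\nleftrightarrow Z$) and $\mathcal{D}_2$ (three independent parts on four vertices), and the only conceivable $\mathcal{D}_3$-split puts $\{a,b,c\}$ as the $\mathcal{D}_2$-part and $\{v\}$ as the trivial $\mathcal{D}_1$-part, with every cross arc running from the $\mathcal{D}_2$-part into it, the direction the definition of $\mathcal{D}_3$ forbids. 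So your claim that the period analysis ``meets every clause of the definitions'', in particular that all remaining arcs sit where $\mathcal{D}_3$ wants them, is false as stated: a density-$0$ vertex may be absorbed once by each of $U_1,U_2,U_3$ rather than by a $W$--$Z$ pair. (The same configuration also slips past the sentence in the paper's Case 3 asserting that $A_1$ must dominate every vertex of $A_3$, so this is a substantive structural issue rather than a detail: either such vertices must be excluded by an argument you have not given, or the target description of the components has to be adjusted, and your proposal does neither.)
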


\begin{proof} Suppose that $V(C_k^0)=\{v_1,\ldots,v_k\}$ where $A(C_k^0)=\{(v_k,v_1)\}\cup \{(v_i,v_{i+1}):i\in\{1,\ldots,k-1\}\}$. Let $C$ be any component of $D$. We first assume that $k=2\ell$ for some positive integer $\ell$  and let $C\in \mathcal{D}_1$. Hence, $C=D_1$ for some digraph obtained from a digraph $D'$ together with the sets of vertices $W=\{w_1,\ldots,w_p\}$ and $Z=\{z_1,\ldots,z_r\}$. We set $S^i_1=W\times \{v_{2i-1}\}$ and $S^i_2=Z\times \{v_{2i}\}$ for every $i\in\{1,\ldots,\ell \}$. We will show that $S_1=\bigcup_{i=1}^{\ell}(S_1^i\cup S_2^i)$ is an ECD set of $C\Box C_k^0$. Every vertex from $V'\times \{v_{2i-1}\}$ is adjacent from at least one vertex from $S_1^{i}$ because $W$ is an ECD set in the digraph induced by $W\cup V'$. Similarly, every vertex from $V'\times \{v_{2i}\}$ is adjacent from at least one vertex from $S_2^i$ because $Z$ is an ECD set in the digraph induced by $Z\cup V'$. By the properties of Cartesian product, every vertex from $W\times \{v_{2i}\}$ is adjacent from exactly one vertex from $S_1^{i}$, and every vertex from $Z\times \{v_{2i+1}\}$ is adjacent from exactly one vertex from $S_2^{i}$ (indices are understood modulo $k$). Therefore, every vertex is either in $S_1$ or is adjacent from a vertex from $S_1$ and so, closed out-neighborhoods centered in $S_1$ form a cover of $V(C\Box C_k^0)$.

We still need to show that any two different closed out-neighborhoods centered in vertices from $S_1$ have an empty intersection. If this is not the case, then two vertices from $S_1$ are either both in $S_1^i$, or both in $S_2^i$, or one in $S_1^i$ and other in $S_2^i$, or one in $S_2^i$ and another in $S_1^{i+1}$ for every $i\in\{1,\ldots,\ell \}$. In the first two cases we obtain a contradiction with the properties of Cartesian product or with the fact that $p_C(S_1^i)$ and  $p_C(S_2^i)$ are ECD sets for $C[W\cup V']$ and $C[Z\cup V']$, respectively. In the last two cases we obtain a contradiction with the fact that there are no arcs between $p_C(S_1^i)$ and $p_C(S_2^i)$ in any direction. Hence, $S_1$ is and ECD set, and therefore, $C\Box C_k^0$ is an ECD digraph.

Let know $k=3\ell$ and $C\in \mathcal{D}_2$, and in this sense, let be $U_1,U_2$ and $U_3$ as previously described. We will show that the set $S_2=S^1\cup S^2\cup S^3$ is an ECD set of $C\Box C_k^0$, where $S^1=U_1\times \{v_{3i+1}:i\in \{0,\ldots,\ell-1\}\}$, $S^2=U_2\times \{v_{3i+2}:i\in \{0,\ldots,\ell-1\}\}$ and $S^3=U_3\times \{v_{3i}:i\in \{1,\ldots,\ell\}\}$.
We first show that the closed neighborhoods centered in vertices from $S_2$ cover $V(C\Box C_k^0)$. By the definition of the Cartesian product, we have that the vertices from $U_2\times \{v_{3i+1}:i\in \{0,\ldots,\ell-1\}\}$ and from $U_1\times \{v_{3i+2}:i\in \{0,\ldots,\ell-1\}\}$ have an in-neighbor in $S^1$. Similarly, those vertices from $U_3\times \{v_{3i+2}:i\in \{0,\ldots,\ell-1\}\}$ and from $U_2\times \{v_{3i}:i\in \{1,\ldots,\ell\}\}$ have an in-neighbor in $S^2$. Finally, the vertices from $U_1\times \{v_{3i}:i\in \{1,\ldots,\ell\}\}$ and from $U_3\times \{v_{3i+1}:i\in \{0,\ldots,\ell-1\}\}$ have an in-neighbor in $S^3$, and every vertex from $C\Box C_k^0$ is contained in at least one closed neighborhood centered in a vertex from $S_2$.

Next we show that all these different closed neighborhoods have pairwise empty intersection. Clearly, by the orientation of $C_k^0$ converse may occur only if closed neighborhoods have centers in the same layer of $C^{v_i}$, or in-neighboring layers $C^{v_i}$ and $C^{v_{i+1}}$ for some $i\in \{1,\ldots,k\}$ (if $i=k$, then $i+1$ must be considered modulo $k$). The first case is not possible because $U_j$, $j\in\{1,2,3\}$, is an ECD set in $D[U_j\cup U_{j+1}]$, and there is no arc from $U_j$ to $U_{j+2}$ (subscripts must be considered modulo 3 here). The second case is not possible by the orientation of the arc $(v_i,v_{i+1})$, and again, since there is no arc from $U_j$ to $U_{j+2}$ in $C^{v_{i+1}}$. Therefore, $S_2$ is an ECD set and $C\Box C_k^0$ is an ECD graph.

To end this implication let $k=6\ell$ and $C\in \mathcal{D}_3$. Let $C_1$ and $C_2$ be subdigraphs of $C$ such that $C_1\in \mathcal{D}_1$ and that $C_2\in \mathcal{D}_2$, respectively. As $k$ is even and also a multiple of three, $C_1\Box C_k^0$ and $C_2\Box C_k^0$ are ECD digraphs with ECD sets $S_1$ and $S_2$, respectively, as described. The set $S=S_1\cup S_2$ is and ECD set of $C\Box C_k^0$ because every arc between $C_1$ and $C_2$ starts in $V'$ and end in $V(C_2)$ and have therefore no influence on closed out-neighborhoods centered in $S$ as $S\cap (V'\times V(C_k^0))=\emptyset$. Thus, $C\Box C_k^0$ is an ECD set.

Since the component $C$ was arbitrarily chosen, $C\Box C_k^0$ is an ECD digraph for every component $C$ from $D$ and therefore, also $D\Box C_k^0$ is an ECD digraph.

Let now $D\Box C_k^0$ be an ECD digraph. Clearly, for every component $C$ of $D$, $C\Box C_k^0$ is also an ECD digraph. Let $S$ be an ECD set of $C\Box C_k^0$ and let $S_i=S\cap C^{v_i}$. Suppose that there exists $i\in\{1,\ldots,k\}$ (by symmetry we may assume that $i=1$) such that $S_1=C^{v_1}$. Clearly, in such a case $C$ is a one vertex digraph and $C\in \mathcal{D}_1$ (also note that every $S_i$ is formed by only one vertex). By the properties of Cartesian product, the set $S_1$ efficiently closed dominates $C^{v_2}$. Therefore, $S_3=C^{v_3}$ and $S_3$ efficiently closed dominates $C^{v_4}$. Continuing in this way, we see that $k$ must be an even number which finishes this case.

We may assume now that $C$ is not an isolated vertex and that $S_i\neq C^{v_i}$ for every $i\in\{1,\ldots,k\}$. Let $W=p_C(S_1)$, let $Z$ be the set of all vertices from $V(C)$ such that $W\nrightarrow Z$ and let $V'=V(C)-(W\cup Z)$. By the definition of sets $W,Z$ and $V'$, we have $W\rightarrow V'$. If $Z=\emptyset$, then $S_k=\emptyset$, and therefore $S_{k-1}=C^{v_{k-1}}$, which is a contradiction. Thus $Z\neq \emptyset$. The vertices from $Z\times \{v_1\}$ can be covered only by vertices from $Z\times \{v_k\}$ and arcs between these two sets form a matching. Hence, $Z\times \{v_k\}\subseteq S_k$. If $S_k\neq Z\times \{v_k\}$, then we have an arc between two vertices of $S$, or a vertex from $V'\times \{v_1\}$ is dominated twice from vertices in $S$, and both choices are not possible. Therefore, $S_k=Z\times \{v_k\}$. Now we move to $C^{v_k}$ and $S_k$. For this we separate three cases and use the notation $Z_k$ for the set $N^+_{C\Box C_k^0}(S_k)\cap C^{v_k}$, which are all out-neighbors of $S_k$ in $C^{v_k}$.\medskip

\noindent \textbf{Case 1: }$Z_k\subseteq V'\times \{v_k\}$

\noindent Suppose, with a purpose of contradiction, that there exists a vertex $(x,v_k)\in (V'\times \{v_k\})-Z_k$. Hence, the set $(W\times \{v_k\})\cup \{(x,v_k)\}$ is not dominated by $S_k$ and by the properties of Cartesian product $\left((W\times \{v_{k-1}\})\cup \{(x,v_{k-1})\}\right)\subseteq S_{k-1}$. This is not possible because $(x,v_{k-1})$ has an in-neighbor in $W\times  \{v_{k-1}\}$ as $(x,v_1)$ has an in-neighbor in $S_1=W\times \{v_1\}$. Thus, $V'\times \{v_k\}=Z_k$ and $W\times \{v_{k-1}\}=S_{k-1}$ by the properties of Cartesian product. If $k=2$, then we are done as $S_{k-1}=S_1$ and $C=D_1\in \mathcal{D}_1$ by the chosen notation. If $k>2$, then we notice that $C^{v_{k-1}}$ has the same structure as $C^{v_1}$. By the same arguments we get that $C^{v_{k-2}}$ has the same structure as $C^{v_k}$ and $C^{v_{k-3}}$ has the same structure as $C^{v_1}$. Repeating that step, we obtain that $C^{v_{2i+1}}$ has the same structure as $C^{v_1}$ and that $C^{v_{2i}}$ has the same structure as $C^{v_k}$ for every $i\in\{1,\ldots,\ell-1\}$. Therefore, $k=2\ell$ and, again by the chosen notation, we have that $C=D_1\in \mathcal{D}_1$. \medskip

\noindent \textbf{Case 2: }$Z_k\subseteq W\times \{v_k\}$

\noindent Again, in order to get a contradiction, suppose that there exists a vertex $(x,v_k)\in (W\times \{v_k\})-Z_k$. In addition we change notation to $U_1=W$, $U_2=V'$ and $U_3=Z$. If $\delta^+_C(x)>0$, then there exists an arc $(x,y)\in A(C)$ for some $y\in p_C(U_2)$. Now, we observe that $\{(x,v_{k-1})\}\cup (U_2\times \{v_{k-1})\}\subseteq S_{k-1}$, by properties of Cartesian product and also, since $(x,v_k)$ must be dominated by $S$. This yields a contradiction because there exists an arc $((x,v_{k-1}),(y,v_{k-1}))$ between two vertices of $S_{k-1}$ which is not possible. Otherwise, if $\delta^+_C(x)=0$, then $\delta^-_C(x)>0$, as $C$ is not an isolated vertex. Clearly, the in-neighbor $y$ of $x$ in $C$ must be in $U_2$, because $x$ is in $U_1$, which is an independent set of vertices and there exists no in-neighbor of $x$ in $U_3$, since $(x,v_k)\in (U_1\times \{v_k\})-Z_k$. Again, $\{(x,v_{k-1})\}\cup (U_2\times \{v_{k-1})\}\subseteq S_{k-1}$ by properties of Cartesian product and also, because $(x,v_k)$ must be dominated by $S$. Consequently, we have a similar contradiction with the existence of an arc $((y,v_{k-1}),(x,v_{k-1}))$ between two vertices of $S_{k-1}$.

Therefore, $U_1\times \{v_k\}=Z_k$ and $U_2\times \{v_{k-1}\}\subseteq S_{k-1}$ in order to cover all vertices from $U_2\times \{v_k\}$. Even more, $U_2\times \{v_{k-1}\}=S_{k-1}$, since otherwise a vertex from $(U_1\cup U_3)\times \{v_{k}\}$ would be in two different out-neighborhoods centered in $S$, which is not possible. Now, suppose that there exists a vertex $x\in U_3$ such that $(x,v_{k-1})$ is not covered by $S_{k-1}$. This leads to $\delta^-_C(x)=0$ due to the definition of $U_3$, which implies that $\delta^+_C(x)>0$. Let $y\in U_1$ be an out-neighbor of $x$ in $C$. To cover the remaining parts of $C^{v_{k-1}}$, we have $\{(x,v_{k-2})\}\cup(U_1\times \{v_{k-2}\}\subseteq S_{k-2}$. Hence, the arc $((x,v_{k-2}),(y,v_{k-2}))$ produces another contradiction as both end-vertices $(y,v_{k-2})$ and $(x,v_{k-2})$ belong to $S_{k-2}$. Therefore, every vertex from $U_3\times \{v_{k-1}\}$ is dominated by some vertex from $U_2\times \{v_{k-1}\}$. Finally, $S_{k-2}=U_1\times\{v_{k-2}\}$ since all vertices from $U_1\times\{v_{k-1}\}$ are dominated by $S$ as well.

Notice that the structure of the layer $C^{v_1}$ implies that $U_1$ is an ECD set in the digraph $C[U_1\cup U_2]$. Similarly, from the layer $C^{v_{k}}$ we deduce that $U_3$ is an ECD set in the digraph $C[U_1\cup U_3]$, and finally, that $U_2$ is an ECD set in the digraph $C[U_2\cup U_3]$ can be seen from the layer $C^{v_{k-1}}$. Therefore, $C\in \mathcal{D}_2$ by the used notation. If $k=3$, then $C^{v_{k-2}}=C^{v_1}$ and we are done. If $k>3$, then we continue with the same pattern and we can see that $S_i=U_1\times \{v_i\}$ whenever $i=3j-2$, that $S_i=U_2\times \{v_i\}$ whenever $i=3j-1$ and that $S_i=U_2\times \{v_i\}$ whenever $i=3j$ for some $j\in\{1,\ldots,\ell\}$. Then, clearly, $k=3\ell$ which completes this case.\medskip

\noindent \textbf{Case 3: }$Z_k\cap (W\times \{v_k\})\neq \emptyset$ and $Z_k\cap (V'\times \{v_k\})\neq \emptyset$

\noindent	Let us first introduce some notation. Let $A_2=p_C(Z_k\cap (W\times \{v_k\}))$	and $A_3=p_C(Z_k\cap (V'\times \{v_k\}))$. In $A_6$ are all sinks from $Z$ and all vertices from $Z$  that are adjacent to vertices in $A_2$ and $A_5=Z-A_6$. In addition let $A_1=W-A_2$ and $A_4=V'-A_3$. From these definitions we have the following:
\begin{itemize}
  \item $A_1\nleftrightarrow A_2$ because $(A_1\cup A_2)\times\{v_1\}=S_1$,
  \item $A_5\nleftrightarrow A_6$ because $(A_5\cup A_6)\times\{v_1\}=S_k$,
  \item $A_1,A_2\nrightarrow A_5,A_6$ because $A_1\cup A_2=W$ and $A_5\cup A_6=Z$,
  \item $A_5\rightarrow A_3$ and $A_6\rightarrow A_2$ by the definitions of $A_5$ and $A_6$,
  \item $A_5\nrightarrow A_2$ and $A_6\nrightarrow A_3$ by the previous item,
  \item $A_5\nrightarrow A_4$ by the definition of $A_4$,
  \item $A_6\nrightarrow A_1$ by the definition of $A_2$ in $S_k$,
  \item $A_6\nrightarrow A_4$ and $A_5\nrightarrow A_1$, both by the definition of $A_2$ and $A_3$ in $S_k$.
\end{itemize}
Now, in order to dominate $A_1\times \{v_k\}$ (and respectively $A_4\times \{v_k\}$) by $S$, it must happen $A_1\times \{v_{k-1}\}\subset S_{k-1}$ (and respectively $A_4\times \{v_{k-1}\}\subset S_{k-1}$). This brings immediately $A_1\nleftrightarrow A_4$, and therefore, $A_2\rightarrow A_4$ because of $S_1$. This, in addition, brings that out-neighbors from $A_1$ must be in $A_3$. If $A_1$ does not dominate every vertex from $A_3$, then we obtain the same contradiction as in Case 1. Hence we have that $A_1\rightarrow A_3$, which leads to that $A_4\nrightarrow A_3$, otherwise $S_{k-1}$ is not a subset of the ECD set $S$. Another consequence of $A_1\rightarrow A_3$ is that $A_2\nrightarrow A_3$, for otherwise, $S_1$ would dominate some vertices of $A_3$ more than once.

If $A_4$ does not cover all vertices from $A_6$, then some vertices of $A_6\times \{v_{k-2}\}$ must be in $S_{k-2}$. This is a contradiction because $(A_2\times \{v_{k-2}\})\subset S_{k-2}$ and $A_6\rightarrow A_2$. Therefore, we have $A_4\rightarrow A_6$. Finally, we show that $A_4\nrightarrow A_5$. Suppose on the contrary that a vertex $(x,v_{k-1})$ from $A_5\times \{v_{k-1}\}$ is dominated from $A_4\times \{v_{k-1}\}$. Note that the vertex $(x,v_{k-2})$ is therefore not in $S_{k-2}$ and $(x,v_{k-3})$ must be in $S_{k-3}$. Recall that every vertex from $A_5$ has an out-neighbor, say $y$. All out-neighbors from $A_5$ are in $A_3$ and $(y,v_{k-3})$ is covered from $(x,v_{k-3})\in S_{k-3}$. On the other hand, $A_1\times \{v_{k-3}\}\subset S_{k-3}$ in order to cover vertices from $A_1\times \{v_{k-2}\}$. Because $A_1\rightarrow A_3$, the vertex $(y,v_{k-3})$ has at least two in-neighbors in $S_{k-3}$, which is a contradiction. Thus, $A_4\nrightarrow A_5$ holds.

As a consequence, by this notation, we can see that $A_1,A_3$ and $A_5$ have the same role as $W,V'$ and $Z$, respectively, in Case 1. By the same reason, as therein, it happens $C[A_1\cup A_3\cup A_5]$ induces a digraph from $\mathcal{D}_1$. Similarly, $A_2,A_4$ and $A_6$ play the role of $U_1,U_2$ and $U_3$, respectively, in Case 2. By following the same lines as in Case 2, we note that $C[A_2\cup A_4\cup A_6]$ induces a digraph from $\mathcal{D}_2$. Since $C$ is a connected component, this yields an existence of some arcs between $C[A_1\cup A_3\cup A_5]$ and $C[A_2\cup A_4\cup A_6]$. The only possibility are arcs from some vertices of $A_3=V'$ to some vertices of $A_2\cup A_4\cup A_6$. But in this case $C\in \mathcal{D}_3$. The proof is finished, since $k$ must be both a multiple of 2 as $C[A_1\cup A_3\cup A_5]\in \mathcal{D}_1$ and a multiple of 3 as $C[A_1\cup A_3\cup A_5]\in \mathcal{D}_2$. 
\end{proof}

While it could probably be algorithmically difficult to recognize those digraphs that belong either to $\mathcal{D}_1$ or to $\mathcal{D}_2$, it is very easy to construct such digraphs as explained in their definition. Also, the variety of digraphs in $\mathcal{D}_1$ is wider than that in $\mathcal{D}_2$, due to the in-degree condition of every vertex of a digraph from $\mathcal{D}_2$.

We next continue by describing a family of digraphs which is crucial for a digraph $F\Box K_{1,t}$ to be an ECD digraph. Let $D$ be an arbitrary digraph. We first partition $V(D)$ into nonempty sets $W_1,\ldots,W_p$. We then define a digraph $D_p$ from $D$ as follows. If $A(D)=\emptyset$ ($D$ has no arcs), then $D_p=D$. Otherwise, $V(D_p)=V(D)\cup \{w_1,\ldots,w_p\}$, where $\{w_1,\ldots,w_p\}$ is a set of external vertices, and
$$A(D_p)=A(D)\cup \{(w_i,v):v\in W_i,i\in\{1,\ldots,p\}\}\cup B,$$
where $B$ is any subset of the set $\{(v,w_i):v\in V(D),i\in\{1,\ldots,r\}\}$. Notice that $D_p$ is an ECD digraph with an ECD set $W=\{w_1,\ldots,w_p\}$.

Further, let $Z_1,\ldots,Z_r$ be a partition of $V(D_p)$ into nonempty sets. We define a new digraph $D_{p,r}$ as follows. If $A(D_p)=\emptyset $, then $D_{p,r}=D_p$. Otherwise, $V(D_{p,r})=V(D_p)\cup \{z_1,\ldots,z_r\}$, where $\{z_1,\ldots,z_r\}$ is other set of external vertices, and
$$A(D_{p,r})=A(D_p)\cup \{(z_i,u):u\in Z_i,i\in\{1,\ldots,r\} \}\cup B'$$
where $B'$ is any subset of the set $\{(v,z_i):v\in V(D),i\in\{1,\ldots,r\}\}$. By the same reason as above, the set $Z=\{z_1,\ldots,z_r\}$ is an ECD set of $D_{p,r}$, which is therefore an ECD digraph as well. Notice that $\delta^-_{D_{p,r}}(w_j)=1$ for every $j\in\{1,\ldots,p\}$.

Given a digraph $D$, by $\mathcal{D}_D$ we denote the family of all possible digraphs $D_{p,r}$ constructed from $D$, and by $\mathcal{D}_0$ we represent the union of all families $\mathcal{D}_D$ over any arbitrary digraph $D$. We can summarize the elements of $\mathcal{D}_D$ as digraphs that have an ECD set $S$ and the digraph induced by $V(D)-S$ is an ECD digraph again with an ECD set $S'$ such that there exists no arc that starts in $S'$ and ends in $S$.

\begin{theorem}\label{ecdcartsun}
Let $F$ be a digraph and $K_{1,t}$ is oriented in such a way that its central vertex is a source. The Cartesian product $F\Box K_{1,t}$ is an ECD digraph if and only if $F\in \mathcal{D}_0$.
\end{theorem}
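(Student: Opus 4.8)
The plan is to combine the explicit form of closed out-neighbourhoods in the Cartesian product with the structural description of $\mathcal{D}_0$ recalled above, namely that a digraph $H$ lies in $\mathcal{D}_0$ exactly when it has an ECD set $A$ such that $H[V(H)\setminus A]$ is an ECD digraph with an ECD set $B$ and there is no arc from $B$ to $A$. Throughout, write $V(K_{1,t})=\{c,u_1,\ldots,u_t\}$ with $c$ the source, so that $A(K_{1,t})=\{(c,u_j):1\le j\le t\}$. Then in $F\Box K_{1,t}$ the only arcs between two different $F$-layers go from $(d,c)$ to $(d,u_j)$; moreover, since $c$ is a source and every $u_j$ a sink in $K_{1,t}$, we have $N^+_{F\Box K_{1,t}}[(d,c)]=\bigl(N^+_F[d]\times\{c\}\bigr)\cup\bigl(\{d\}\times\{u_1,\ldots,u_t\}\bigr)$ and $N^+_{F\Box K_{1,t}}[(d,u_j)]=N^+_F[d]\times\{u_j\}$. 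Two structural facts will be used repeatedly: a vertex of the layer $F^c$ has all of its in-neighbours inside $F^c$, hence can only be dominated from $F^c$; and a vertex $(d,u_j)$ has, besides its in-neighbours inside $F^{u_j}$, exactly one further in-neighbour, namely $(d,c)$.

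For the direction $F\in\mathcal{D}_0\Rightarrow F\Box K_{1,t}$ is an ECD digraph, I would take $A$ and $B$ as in the characterisation of $\mathcal{D}_0$ and claim that $S=\bigl(A\times\{c\}\bigr)\cup\bigl(B\times\{u_1,\ldots,u_t\}\bigr)$ is an ECD set of $F\Box K_{1,t}$. For the covering property: the sets $N^+_F[a]\times\{c\}$ with $a\in A$ tile $F^c$ because $A$ is an ECD set of $F$; the vertices $A\times\{u_j\}$ are covered from $A\times\{c\}$ through the vertical arcs; and the remaining vertices $(V(F)\setminus A)\times\{u_j\}$ are tiled by the sets $N^+_F[b]\times\{u_j\}$, $b\in B$, where the key point is that $N^+_F[b]=N^+_{F[V(F)\setminus A]}[b]$ holds precisely because there is no arc from $B$ to $A$, and $B$ is an ECD set of $F[V(F)\setminus A]$. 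For disjointness one checks the few types of pairs of centres: two centres in the same leaf layer, or two centres in $F^c$, give disjoint closed out-neighbourhoods because $A$, respectively $B$, is an ECD set (and, for the $F^c$ case, because their leaf-parts $\{a\}\times\{u_1,\ldots,u_t\}$ are trivially disjoint); a centre in $F^c$ and a centre in a leaf layer would overlap only if some $a\in A$ lay in $N^+_F[b]$ for some $b\in B$, which is again excluded by the absence of arcs from $B$ to $A$; and centres in distinct leaf layers clearly cannot interfere.

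For the converse, let $S$ be an ECD set of $F\Box K_{1,t}$ and put $A=p_F(S\cap F^c)$ and, for each $j$, $B_j=p_F(S\cap F^{u_j})$. Since every vertex of $F^c$ can only be dominated from $F^c$, intersecting the partition $\{N^+_{F\Box K_{1,t}}[x]:x\in S\}$ with $F^c$ yields that $\{N^+_F[a]\times\{c\}:a\in A\}$ is a partition of $F^c$, i.e.\ $A$ is an ECD set of $F$. Next, each $(a,c)\in S$ already dominates all of $\{a\}\times\{u_1,\ldots,u_t\}$, so no vertex of $S\cap F^{u_j}$ may dominate a vertex of $A\times\{u_j\}$; on the other hand, a vertex $(d,u_j)$ with $d\notin A$ is not dominated by its unique external in-neighbour $(d,c)$ (which is not in $S$), hence must be dominated from within $F^{u_j}$. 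Combining these two facts with the fact that $S$ induces a partition, one deduces that $B_j$ is an ECD set of $F[V(F)\setminus A]$ and that there is no arc from $B_j$ to $A$ (an arc from $b\in B_j$ to $a\in A$ would make $(a,u_j)$ dominated both by $(b,u_j)$ and by $(a,c)$). Since $t\ge 1$ we may fix one such $j$, and then $A$ together with $B_j$ witnesses $F\in\mathcal{D}_0$.

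I expect the only-if direction to be the delicate part: one has to read off, layer by layer, exactly which vertices of each $F^{u_j}$ are forced to be dominated from the central layer and which must be dominated internally, and then convert the ``dominated exactly once'' condition into the two precise statements ``$B_j$ is an ECD set of $F[V(F)\setminus A]$'' and ``no arc from $B_j$ to $A$''. A minor additional care is needed for the boundary configurations (for instance $V(F)=\emptyset$, or $A=V(F)$, or $F$ without arcs), which have to be matched with the degenerate clauses appearing in the definition of $\mathcal{D}_0$.
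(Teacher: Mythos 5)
Your proposal is correct and takes essentially the same approach as the paper: the sufficiency part uses the same ECD set $\bigl(A\times\{c\}\bigr)\cup\bigl(B\times\{u_1,\ldots,u_t\}\bigr)$ (the paper's $Z\times\{v\}\cup\bigcup_i W\times\{u_i\}$), and the necessity part performs the same layer-by-layer analysis, reading off an ECD set of $F$ from the central layer and an ECD set of $F[V(F)\setminus A]$ with no arcs back into $A$ from a leaf layer. The only cosmetic difference is that you conclude via the paper's summarized description of $\mathcal{D}_0$ instead of re-assembling the explicit $D_{p,r}$ construction as the paper's converse does, which does not affect correctness.
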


\begin{proof} 
Assume first $F\in \mathcal{D}_0$ and let $F=D_{p,r}$. Also, let $D_p$, $W$ and $Z$ be as described before. Moreover, let $v\in V(K_{1,t})$ be the central vertex of $K_{1,t}$, which is a source. This means that all leaves $\{u_1,\ldots,u_t\}$ of $K_{1,t}$ are sinks. We claim that the set
$$S=\left(Z\times \{v\}\right) \cup \bigcup_{i=1}^t\left(W\times \{u_i\}\right)$$
is an ECD set of $F\Box K_{1,t}$.

Every vertex from $(V(F)-Z)\times \{v\}$ is dominated by exactly one vertex of $Z\times \{v\}$ because $Z$ is an ECD set for $D_{p,r}=F$. In addition, the set $Z\times \{v\}$ also dominates every vertex from $Z\times \{u_i\}$, for every $i\in\{1,\ldots,t\}$, exactly once, with the arc $((z_j,w),(z_j,u_i))$ for every $j\in\{1,\ldots,r\}$. Any vertex $(x,u_i)$ for $x\in V(F)-(W\cup Z)$ and $i\in\{1,\ldots,t\}$ has an in-neighbor in $W\times \{u_i\}$ because $W$ is an ECD set of $D_p$. Therefore, each vertex outside of $S$ has at least one in-neighbor in $S$ and $V(F\Box K_{1,t})\subseteq \bigcup_{y\in S}N_{F\Box K_{1,t}}[y]$.

We need to show that different closed neighborhoods centered in vertices from $S$ are pairwise disjoint. Suppose on the contrary, that $N^+_{F\Box K_{1,t}}[y]\cap N^+_{F\Box K_{1,t}}[y']$ is nonempty for some $y,y'\in S$, where $y=(y_1,y_2)\neq (y'_1,y'_2)=y'$. If $y,y'\in Z\times \{v\}$, then $y_1\neq y'_1$, and we have a contradiction with $Z$ being an ECD set of $F=D_{p,r}$. If $y\in Z\times \{v\}$ and $y'\in W\times \{u_i\}$ for some $i\in\{1,\ldots,t\}$, then $N^+_{F\Box K_{1,t}}[y]\cap Z\times \{u_i\}=\{(y_1,u_i)\}$ and $N^+_{F\Box K_{1,t}}[y']\subseteq F^{u_i}-\left(Z\times \{u_i\}\right)$, which is a contradiction, because $(y_1,u_i)\notin N^+_{F\Box K_{1,t}}[y']$. Let now $y,y'\in W\times \{u_i\}$ for some $i\in\{1,\ldots,t\}$. In this case $y_1\neq y'_1$ and we have a contradiction with $W$ being an ECD set of $D_p$, since
$$N^+_{F\Box K_{1,t}}(y),N^+_{F\Box K_{1,t}}(y')\subseteq F^{u_i}-\left(\left(Z\cup W\right)\times \{u_i\}\right).$$
Finally, if $y\in W\times \{u_i\}$ and $y'\in W\times \{u_j\}$ for some different $i,j\in\{1,\ldots,t\}$, then we have an immediate contradiction as $u_i$ and $u_j$ are both sinks in $K_{1,t}$. Therefore, every two distinct closed neighborhoods centered in $S$ have empty intersection, which means $S$ is an ECD set and so, $F\Box K_{1,t}$ is an ECD digraph.

Conversely, we assume that the Cartesian product $F\Box K_{1,t}$ is an ECD digraph with an ECD set $S$. We use the same notation for $K_{1,t}$, where the central vertex $v$ is the source and all leaves $u_1,\ldots,u_t$ are sinks. Let also $Z^v=S\cap F^v$ and $Z=p_F(Z^v)$. If $Z^v=F^v$, then $F$ must be an empty digraph (recall that empty digraphs belong to $\mathcal{D}_D$), and we can understand $F$ as a digraph $D_{p,r}=D_p=D$. Thus, clearly $F\in \mathcal{D}_D$ and therefore, also $F\in \mathcal{D}_0$. So, from now on, we may assume that $F$ contains some arcs. Notice that $Z^v$ is nonempty because $v$ is the source of $K_{1,t}$ and all in-neighbors from vertices of $F^v$ are also of $F^v$. On the other hand, every vertex of $F^v-Z^v$ has exactly one in-neighbor in $Z^v$, since $S$ is an ECD set. Thus, as $F$ is isomorphic to the subdigraph induced by $F^v$, we have that $Z$ is an ECD set of $F$, or equivalently, $F$ is an ECD digraph. It hence remains to prove that also $F^v-Z$ induces an ECD digraph.

To this end, we next consider the subgraph of $F\Box K_{1,t}$ induced by $(V(F)-Z)\times \{u_1\}=F_p^{u_1}$. The in-neighbors from vertices of $F_p^{u_1}$ are either in $F_p^{u_1}$ or in $Z\times \{u_1\}$. Based on the fact that every vertex of $Z\times \{u_1\}$ is dominated by a vertex from $Z\times \{v\}$ which is a subset of $S$, it must happen that the subdigraph $F'=F\Box K_{1,t}[F_p^{u_1}]$ will be an ECD digraph itself. If $F'$ is without arcs, then we can understand $F$ as a digraph $D_{p,r}$ obtained from $D_p=F-Z=D$ and so, $F\in \mathcal{D}_D$, which therefore means $F\in\mathcal{D}_0$. In concordance, we may assume that $F'$ contains some arcs. Since $S$ is an ECD set of $F\Box K_{1,t}$, the intersection $W^{u_1}=S\cap F^{u_1}$ is nonempty and, moreover, $W^{u_1}$ is an ECD set of $F'$. Let $W=p_F(W^{u_1})$, let $B'$ be the set of arcs of $F$ that start in a vertex in $V(F)-Z$ and end at some vertex in $Z$, and let $B$ be the sets of all arcs that start in $V(F)-(Z\cup W)$ and end in $W$. In this sense, we take the digraph $D$ induced by the set of vertices $V(F)-(Z\cup W)$. Hence, notice that a digraph $D_p$ can be obtained from $D$ by adding the vertices in $W$, and arcs in $B\cup\{(w_i,x):x\in T_i,i\in\{1,\ldots,p\}\}$. In addition, we obtain our desired $D_{p,r}$ from $D_p$ by adding vertices in $Z$ and arcs in $B'\cup\{(z_i,y):y\in Z_i,i\in\{1,\ldots,r\} \}$. As a consequence, we observe that $F$ can be constructed in such a described process, \emph{i.e.}, $F=D_{p,r}\in \mathcal{D}_D$, and therefore in $F\in \mathcal{D}_0$. 
\end{proof}

Let us mention, without a proof, that if arcs of $K_{1,t}$ are oriented such that $t_1>0$ leaves are sources and $t_2=t-t_1$ are sinks, then we can proceed as follows to describe the structure of a digraph $F$ such that $F\Box K_{1,t}$ is an ECD digraph. The vertices of $F$ must be partitioned into $t_1+3$ sets $W_1,\ldots,W_{t_1},W_{t_1+1},W_{t_1+2},W_{t_1+3}$ such that every set $W_i$, with $i\in\{1,\ldots,t_1\}$, is an ECD set of $F$. Further, the set $W_{t_1+1}$ must be an ECD set for the digraph induced by the vertices $W_{t_1+1}\cup W_{t_1+2}\cup W_{t_1+3}$ and all out-arcs from $W_{t_1+1}$ end in vertices of $W_{t_1+2}\cup W_{t_1+3}$. On the other hand, $W_{t_1+2}$ must be an ECD set in a digraph induced by $W_{t_1+2}\cup W_{t_1+3}\cup \cup_{i=1}^{t_1}W_{i}$ with no arcs starting in $W_{t_1+2}$ and ending in $W_{t_1+1}$. Notice that in this construction, the arcs that starts in $W_{t_0+3}$ and ends everywhere else are also possible. With this construction it is not hard to see that the set
$$S=\bigcup_{i=1}^{t_1}(W_i\times \{u_i\})\cup (W_{t_1+1}\times\{v\})\cup (W_{t_1+2}\times\{u_{t_1+1},\ldots,u_t\})$$
forms an ECD set of $F\Box K_{1,t}$. The converse, if the product $F\Box K_{1,t}$ is an ECD Cartesian product digraph, then $F$ must be as described, holds as well.


\section{Direct product}

The main goal of this section concerns describing all ECD digraphs among direct product of digraphs with $\delta(v)\leq 2$, this means among paths and cycles. The reason for this is hidden in (\ref{dirdeg}) because we are limited with the vertices of the product. Another motivation is that ECD graphs among the direct product of graphs was a hard task and was settled completely in three papers, see \cite{JeKlSp,KlSpZe,Zero}. We will solve this more elegant for directed cycles due to an old result about the structure of the direct product of digraphs. We recall only a special version of this results for cycles.

\begin{theorem}[\cite{Mcan}, Theorem 2]
\label{dirprodstructure} The direct product $C^0_{k_1}\times \cdots \times C^0_{k_t}$ has exactly $\frac{k_1\cdots k_t}{lcm(k_1,\ldots,k_t)}=gcd(k_1,\ldots,k_t)$ connected components.
\end{theorem}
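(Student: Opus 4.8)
The plan is to exploit the rigidity of the direct product of \emph{consistently} oriented cycles, where every vertex has in- and out-degree $1$. First I would identify $V(C^0_{k_i})$ with the cyclic group $\mathbb{Z}_{k_i}$ so that the unique arc leaving $a$ is $(a,a+1)$. By the definition of the direct product, $V(C^0_{k_1}\times\cdots\times C^0_{k_t})=\mathbb{Z}_{k_1}\times\cdots\times\mathbb{Z}_{k_t}$, and the only out-neighbor of $(a_1,\dots,a_t)$ is $(a_1+1,\dots,a_t+1)$, with coordinatewise modular addition; by the symmetric statement for in-neighbors, the only in-neighbor of $(b_1,\dots,b_t)$ is $(b_1-1,\dots,b_t-1)$. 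Hence every vertex of the product has out-degree exactly $1$ and in-degree exactly $1$, which means the product is precisely the functional digraph of the permutation $\sigma\colon(a_1,\dots,a_t)\mapsto(a_1+1,\dots,a_t+1)$ of the vertex set. Therefore each connected component of the product is a directed cycle, and the number of components equals the number of cycles of $\sigma$.

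Next I would determine the lengths of those cycles. Iterating gives $\sigma^{n}(a_1,\dots,a_t)=(a_1+n,\dots,a_t+n)$, so $\sigma^{n}$ fixes some (equivalently, every) vertex exactly when $k_i\mid n$ for all $i$, i.e. when $\operatorname{lcm}(k_1,\dots,k_t)\mid n$. Since this ``return condition'' is independent of the starting vertex, every cycle of $\sigma$ has the same length $L:=\operatorname{lcm}(k_1,\dots,k_t)$. As the cycles partition the $k_1\cdots k_t$ vertices into blocks of size $L$, the number of components is $\dfrac{k_1\cdots k_t}{\operatorname{lcm}(k_1,\dots,k_t)}$, which is the first claimed value.

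It then remains to recognize this quantity as $\gcd(k_1,\dots,k_t)$, a purely arithmetic fact: comparing the exponent of each prime $p$, one needs $\sum_i v_p(k_i)-\max_i v_p(k_i)=\min_i v_p(k_i)$, which is exactly the classical identity $ab=\gcd(a,b)\operatorname{lcm}(a,b)$ in the case $t=2$ (the case actually invoked later for products of two cycles), and it gives the connectedness criterion $\gcd(k_1,k_2)=1$ as a special case. The one step deserving genuine care is the uniformity of the return condition in the second paragraph: only because $\operatorname{lcm}(k_1,\dots,k_t)\mid n$ does not depend on the chosen vertex may we conclude that all cycles have the common length $L$ and hence that the naive count $k_1\cdots k_t/L$ is correct; once this is established, the rest is immediate.
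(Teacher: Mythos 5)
The paper itself gives no proof of this statement---it is quoted from McAndrew---so there is no internal argument to compare against; I judge your proposal on its own terms. The core of your argument is sound and, in fact, proves exactly what the paper later uses: since each vertex of $C^0_{k_1}\times\cdots\times C^0_{k_t}$ has out-degree and in-degree $1$, the product is the functional digraph of the permutation $\sigma(a_1,\ldots,a_t)=(a_1+1,\ldots,a_t+1)$, every component is a directed cycle, the return condition $\sigma^n(a)=a$ is equivalent to $\operatorname{lcm}(k_1,\ldots,k_t)\mid n$ independently of $a$, so all components are cycles of length $L=\operatorname{lcm}(k_1,\ldots,k_t)$ and their number is $k_1\cdots k_t/L$. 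That part is complete and correct.

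The gap is in your final identification of $k_1\cdots k_t/\operatorname{lcm}(k_1,\ldots,k_t)$ with $\gcd(k_1,\ldots,k_t)$. The prime-valuation identity you write, $\sum_i v_p(k_i)-\max_i v_p(k_i)=\min_i v_p(k_i)$, is false as soon as three of the $k_i$ share a prime factor: for $k_1=k_2=k_3=2$ the product has $8/2=4$ components while $\gcd(2,2,2)=2$. So this step cannot be repaired for general $t$---the equality in the statement as quoted holds only for $t=2$ (McAndrew's theorem concerns a product of two digraphs), and iterating the two-factor result gives $\gcd(k_1,k_2)\cdot\gcd(\operatorname{lcm}(k_1,k_2),k_3)\cdots$, which agrees with your $\prod_i k_i/L$ count, not with $\gcd(k_1,\ldots,k_t)$. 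You half-acknowledge this by retreating to $t=2$, but then the statement for arbitrary $t$ is left unproved; you should instead state plainly that your orbit argument establishes the correct component count $k_1\cdots k_t/\operatorname{lcm}(k_1,\ldots,k_t)$ (each component a directed cycle of length $\operatorname{lcm}(k_1,\ldots,k_t)$, which is all the subsequent corollaries really need), and that the further equality with the gcd is valid only in the two-factor case.
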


Since there are no sinks (and no sources) in the cycles of the theorem above, by applying (\ref{dirdeg}), we can note that every vertex $x$ of $C=C^0_{k_1}\times \cdots \times C^0_{k_t}$ has $\delta^-_C(x)=1=\delta^+_C(x)$. Therefore, every component of $C$ must be a cycle with no sinks (and no sources) and we obtain the following corollary.

\begin{corollary}
\label{cyclestructure} The direct product $C^0_{k_1}\times \cdots \times C^0_{k_t}$ is isomorphic to $gcd(k_1,\ldots,k_t)C^0_{lcm(k_1,\ldots,k_t)}$.
\end{corollary}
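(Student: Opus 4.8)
The plan is to combine Theorem~\ref{dirprodstructure} with the degree identity~(\ref{dirdeg}) for the direct product; everything else is essentially bookkeeping. First I would recall that each $C^0_{k_i}$ is a directed cycle with no sources, hence every vertex $d_i$ of $C^0_{k_i}$ satisfies $\delta^+_{C^0_{k_i}}(d_i)=1=\delta^-_{C^0_{k_i}}(d_i)$. Applying~(\ref{dirdeg}) repeatedly (the direct product is associative, so the iterated formula $\delta^+_{C^0_{k_1}\times\cdots\times C^0_{k_t}}(x)=\prod_i\delta^+_{C^0_{k_i}}(p_{C^0_{k_i}}(x))$ holds), every vertex $x$ of $C=C^0_{k_1}\times\cdots\times C^0_{k_t}$ has $\delta^+_C(x)=1=\delta^-_C(x)$.

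Next I would observe that a connected digraph in which every vertex has in-degree and out-degree exactly $1$ is itself a directed cycle: starting from any vertex and repeatedly following the unique out-arc produces a walk that, by finiteness and by $\delta^-\equiv 1$, must eventually close up into a directed cycle, and that cycle already exhausts the component (any further vertex would have to attach via an arc, contradicting the degree-one condition on its endpoints). So each of the components of $C$ is a directed cycle with no sources, i.e. of the form $C^0_{\ell}$ for some $\ell$.

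It then remains to pin down the number of components and their common length. By Theorem~\ref{dirprodstructure}, the number of components is $\gcd(k_1,\ldots,k_t)$; since all $k_1\cdots k_t$ vertices are distributed among these components and all components are directed cycles (hence each contributes exactly as many vertices as its length), and moreover every component is a covering image of each factor cycle so all components are isomorphic, each component is a $C^0_\ell$ with $\ell=\frac{k_1\cdots k_t}{\gcd(k_1,\ldots,k_t)}=\operatorname{lcm}(k_1,\ldots,k_t)$. Hence $C\cong \gcd(k_1,\ldots,k_t)\,C^0_{\operatorname{lcm}(k_1,\ldots,k_t)}$, which is the claim.

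I expect no serious obstacle here — the statement is a corollary precisely because the structural work was already done in Theorem~\ref{dirprodstructure}. The only point requiring a line of care is the elementary lemma that a connected digraph with $\delta^+\equiv\delta^-\equiv 1$ is a directed cycle, together with the remark that all components are isomorphic (which one can either argue directly from the projections being graph homomorphisms, or simply deduce by counting: $\gcd$ equal-or-unequal cycles summing to $k_1\cdots k_t$ vertices with the length of each dividing... in fact the cleanest route is to note every component has length dividing each $k_i$ is false, so I would instead rely on the isomorphism-of-components fact). This keeps the argument short and self-contained.
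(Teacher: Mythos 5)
Your proposal is correct and follows essentially the paper's own route: it uses (\ref{dirdeg}) to get $\delta^+=\delta^-=1$ for every vertex, concludes each component is a directed cycle without sinks or sources, and combines this with the component count of Theorem~\ref{dirprodstructure}. The one point you fret over -- that all components have the same length $lcm(k_1,\ldots,k_t)$ -- is left just as implicit in the paper, and can be settled in one line by noting that following the unique out-arcs from any vertex returns to it after exactly $lcm(k_1,\ldots,k_t)$ steps, so each component is a $C^0_{lcm(k_1,\ldots,k_t)}$.
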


As a cycle $C^0_k$ is an ECD graphs if and only if $k$ is an even number, we immediately obtain the result for direct product of cycles without sinks (and without sources).

\begin{corollary}
\label{cycles} The direct product $C^0_{k_1}\times \cdots \times C^0_{k_t}$ is an ECD graph if and only if at least one number from $\{k_1,\ldots,k_t\}$ is even.
\end{corollary}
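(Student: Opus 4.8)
The plan is to combine the structural result of Corollary~\ref{cyclestructure} with the classical characterization of ECD directed cycles. By Corollary~\ref{cyclestructure}, the direct product $C^0_{k_1}\times \cdots \times C^0_{k_t}$ is isomorphic to the disjoint union of $g=\gcd(k_1,\ldots,k_t)$ copies of the directed cycle $C^0_\ell$, where $\ell=\operatorname{lcm}(k_1,\ldots,k_t)$. Since a digraph is an ECD digraph if and only if each of its components is an ECD digraph, and all $g$ components here are isomorphic to $C^0_\ell$, the product is an ECD graph if and only if $C^0_\ell$ is an ECD digraph.

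Next I would invoke the fact (stated just before the corollary, for directed cycles $C^0_k$) that $C^0_\ell$ is an ECD digraph if and only if $\ell$ is even: in a directed cycle with no sinks, $N^+[v]$ consists of $v$ and its unique out-neighbor, so an ECD set must pick every other vertex, which partitions the cycle exactly when $\ell$ is even. Hence the product is an ECD graph if and only if $\operatorname{lcm}(k_1,\ldots,k_t)$ is even.

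Finally, I would translate the parity of the lcm into a condition on the individual $k_i$: $\operatorname{lcm}(k_1,\ldots,k_t)$ is even precisely when at least one $k_i$ is even. Indeed, if all $k_i$ are odd then their lcm is odd, while if some $k_i$ is even then it divides the lcm, forcing the lcm to be even. Combining the three equivalences yields the statement. I do not expect any genuine obstacle here; the only mild point to be careful about is the degenerate cases (e.g.\ $t=1$, or some $k_i=1$ so that $C_1^0$ is a loop), but these are handled uniformly by the formulas in Theorem~\ref{dirprodstructure} and Corollary~\ref{cyclestructure}, so the argument goes through verbatim.
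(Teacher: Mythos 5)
Your argument is correct and is exactly the route the paper takes: decompose the product via Corollary~\ref{cyclestructure} into $\gcd(k_1,\ldots,k_t)$ copies of $C^0_{\operatorname{lcm}(k_1,\ldots,k_t)}$, use that $C^0_k$ is an ECD digraph if and only if $k$ is even, and note that the lcm is even precisely when some $k_i$ is even. Nothing is missing.
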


However, our ambition is higher and we would like to characterize all ECD digraphs among all directed cycles with sinks and sources. Notice that in this case Theorem \ref{dirprodstructure} does not hold anymore. A source $v$ of a cycle $C$, or of a path $P$, is a \emph{neighboring source} of a vertex $u$, if there exists a directed path from $v$ to $u$, that is, this path contains no sinks with the possible exception of $u$. Similarly, a sink $w$ is a \emph{neighboring sink} of a vertex $u$, if there exists a directed path from $u$ to $w$. The distance between a vertex and its neighboring sink or source is the length of the path between them that contains no other sources or sinks.

\begin{theorem}\label{ecddircycle}
The direct product $C_{k_1}^{p_1}\times\cdots\times C_{k_t}^{p_t}$ is an ECD digraph if and only if $p_1=\cdots=p_{t-1}=0$ and
\begin{itemize}
\item[{\rm (i)}] if $p_t=0$, then at least one number of $\{k_1,\ldots,k_t\}$ is even, or

\item[{\rm (ii)}] if $p_t>0$, then every sink is at even distance to at least one of its neighboring sources.
\end{itemize}
\end{theorem}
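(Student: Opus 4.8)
The plan is to analyze the structure of a component of $C=C_{k_1}^{p_1}\times\cdots\times C_{k_t}^{p_t}$ and use the degree formula~(\ref{dirdeg}) together with the fact (Proposition~\ref{ECD-total}) that an ECD set has a very rigid local structure. First I would observe that by~(\ref{dirdeg}), a vertex $x=(x_1,\ldots,x_t)$ satisfies $\delta^+_C(x)=\prod_i\delta^+_{C_{k_i}^{p_i}}(x_i)$ and $\delta^-_C(x)=\prod_i\delta^-_{C_{k_i}^{p_i}}(x_i)$; since in a directed cycle every vertex has out-degree $0$ or $1$ and in-degree $0$ or $1$, the same holds for $C$. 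In particular a vertex of $C$ is a sink iff at least one coordinate is a sink, a source iff at least one coordinate is a source, and an isolated vertex iff it is simultaneously a sink and a source in some (possibly different) coordinates. The necessity of $p_1=\cdots=p_{t-1}=0$ will come from the following: if two distinct factors, say $C_{k_i}^{p_i}$ and $C_{k_j}^{p_j}$, both have a sink (equivalently a source), then $C$ contains an isolated vertex $x$ (take $x_i$ a sink coordinate, $x_j$ a source coordinate, all other coordinates arbitrary). An isolated vertex forces $N^+_C[x]=\{x\}$ and $x\in S$ for any ECD set $S$; but then $x$ is never dominated-with-multiplicity-one problems aside, this itself is fine — the real obstruction is that an isolated vertex is harmless, so instead I would argue: if $C_{k_i}^{p_i}$ has a sink and $C_{k_j}^{p_j}$ has a source with $i\neq j$, consider a vertex $y$ whose $i$-th coordinate is the in-neighbor of a sink $s_i$ and whose $j$-th coordinate is a source; one shows $y$ has out-degree $1$ but its unique out-neighbor together with $y$ cannot both be consistently partitioned. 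More cleanly: reduce to the ``all but one factor has no sink'' statement by noting that each connected component of $C$, restricted to directed paths between consecutive sinks/sources, behaves like a grid-path product, and such products of two or more nontrivial directed paths are never ECD because of a parity/counting clash on the corners.

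Granting $p_1=\cdots=p_{t-1}=0$, the structure of $C$ simplifies dramatically. When $p_t=0$ as well, $C$ is a direct product of sink-free cycles, and Corollary~\ref{cycles} gives exactly condition~(i). When $p_t>0$, I would use Corollary~\ref{cyclestructure}-style reasoning on the sink-free factors: the product $C_{k_1}^0\times\cdots\times C_{k_{t-1}}^0$ is $g\cdot C_{m}^0$ where $g=\gcd(k_1,\ldots,k_{t-1})$ and $m=\operatorname{lcm}(k_1,\ldots,k_{t-1})$, so $C\cong g\cdot\bigl(C_m^0\times C_{k_t}^{p_t}\bigr)$, and it suffices to analyze one copy $H=C_m^0\times C_{k_t}^{p_t}$. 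The digraph $C_{k_t}^{p_t}$ is a disjoint-in-the-underlying-sense union of $p_t$ maximal directed paths (with sinks and sources interleaved around the cycle), so $H$ decomposes into ``strips'' $C_m^0\times P$ for each maximal directed path $P$ of $C_{k_t}^{p_t}$, where consecutive strips share only the sink/source layers. Each strip $C_m^0\times P$, where $P$ is a directed path on $\ell+1$ vertices $a_0\to a_1\to\cdots\to a_\ell$ (so $a_0$ a source, $a_\ell$ a sink), is itself a disjoint union of directed paths in $H$: a vertex $(u,a_s)$ has out-neighbor $(u',a_{s+1})$ where $u\to u'$ in $C_m^0$, so each component of the strip is a directed path on $\ell+1$ vertices, and there are exactly $m$ of them. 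The key computation is then: a directed path $\vec P_{\ell+1}$ on $\ell+1$ vertices is an ECD digraph iff $\ell$ is even (partition into consecutive triples-no: closed out-neighborhood of a non-sink vertex has size $2$, of the sink has size $1$, so we tile the path greedily from the source: pick vertex $a_0$, covering $\{a_0,a_1\}$; pick $a_2$ covering $\{a_2,a_3\}$; etc.; this works iff $\ell+1$ is even iff $\ell$ is odd — let me instead tile from the sink: the sink $a_\ell$ must be in $S$ and covers only itself, then $a_{\ell-1}\notin N^+[S]$ yet $a_{\ell-1}\in S$ forced? No: $a_{\ell-1}\to a_\ell$, so if $a_\ell\in S$ then $a_{\ell-1}$ cannot be in $S$ and must be dominated by $a_{\ell-2}$, forcing $a_{\ell-2}\in S$; inductively $S=\{a_\ell,a_{\ell-2},a_{\ell-4},\ldots\}$ and this is a valid ECD set iff $a_0$ gets covered iff $\ell$ is even). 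So each strip-component is ECD iff its length-parameter $\ell$ is even, i.e., iff the distance from the source end to the sink end of $P$ is even — but crucially the strips are not independent: the sink layer of one strip coincides with nothing (sinks of $C_{k_t}^{p_t}$ are sinks), while source layers must be reconciled because a source of $C_{k_t}^{p_t}$ lies on the boundary of two strips.

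Here is where the condition ``every sink is at even distance to at least one of its neighboring sources'' enters, and this is the main obstacle to get right. For each sink layer in $H$ (equivalently, each sink vertex of $C_{k_t}^{p_t}$, thickened by $C_m^0$) the vertices in that layer are sinks of $H$, hence forced into any ECD set $S$. Propagating backwards along one of the two directed paths $P$ feeding this sink, the ECD constraint on the strip $C_m^0\times P$ forces the choice of $S$ on all of $P$'s interior and determines whether the source layer at the far end of $P$ is or is not hit by $S$; doing this along both incident strips of a given source layer, the two determinations must agree, and a short parity argument shows they agree exactly when the source is at even distance from the sink through (at least) one of the directions. The hard part will be bookkeeping the global consistency: a source has two neighboring sinks (one in each direction around $C_{k_t}^{p_t}$) and a sink has two neighboring sources, so the ``even distance'' conditions form a constraint graph (really a cycle of length $p_t$ in the sink/source incidence structure), and I must check that the stated condition — each sink even-distance to at least one neighboring source — is both necessary and sufficient for a globally consistent ECD set, rather than, say, needing it for every source too. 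I expect that the asymmetry (sinks are forced into $S$, sources are not) is exactly what makes the sink-side condition the right one: each forced sink layer propagates a constraint, and having one ``even'' escape route per sink suffices to satisfy it while leaving the other strips' choices free; conversely if some sink has both neighboring sources at odd distance, the two forced propagations collide and no ECD set exists. I would close the argument by assembling the per-strip ECD sets consistently when the condition holds, and exhibiting the collision (two vertices in the same closed out-neighborhood, or an undominated vertex) when it fails.
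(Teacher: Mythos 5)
Your global skeleton agrees with the paper's: reduce via Corollary~\ref{cyclestructure} to a single component $C^0_m\times C^{p_t}_{k_t}$ with $m=\operatorname{lcm}(k_1,\ldots,k_{t-1})$, and settle the case $p_t=0$ with Corollary~\ref{cycles}. But two essential steps are missing or wrong. First, the necessity of $p_1=\cdots=p_{t-1}=0$ is never actually proved: your argument about the vertex $y$ fails ($y$ has out-degree at least $2$, since a source of a cycle has out-degree $2$), and the ``more cleanly'' replacement rests on the claim that products of two or more nontrivial directed paths are never ECD, which is false --- by Theorem~\ref{ecddirpath} such products are ECD whenever no factor has a sink of degree $2$. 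The correct obstruction, used in the paper, is concrete: if $v_2$ is a sink of one factor and $u_2$ a source of another, then $(v_1,u_2)$ and $(v_3,u_2)$ are sources of the product, hence forced into any ECD set (a source can only be covered by itself), and both of them dominate the sink $(v_2,u_1)$, so two closed out-neighborhoods centered in the ECD set intersect --- a contradiction.

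Second, in case (ii) you invert the forcing. You assert that the sinks of $H$ are forced into $S$ and propagate backwards from them, and along the way you state the ``key computation'' that a directed path on $\ell+1$ vertices is ECD iff $\ell$ is even; this is false --- every consistently oriented directed path is ECD (take the source and every second vertex after it). In closed out-neighborhood domination the forced vertices are the sources (in-degree $0$), not the sinks, and the forcing propagates forward through the interior vertices of in- and out-degree $1$. This is exactly what makes the sink-side condition correct: if a sink $q$ of $C^{p_t}_{k_t}$ is at odd distance from both neighboring sources, the two forward-forced chains starting at the corresponding sources of the product end at the two in-neighbors of the sink $(v_k,q)$, which is then covered twice; if at least one distance is even, an ECD set can be written down explicitly, e.g.\ $S=V(C^0_m)\times(A\cup R\cup Q')$, where $R$ is the set of sources, $A$ the set of non-sink vertices at even distance from their neighboring source, and $Q'$ the set of sinks with both neighboring sources at even distance --- a construction and verification your plan leaves as expectations (``a short parity argument'', ``I expect that\ldots''). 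Note also that the sink layers, like the source layers, are shared by two strips (each sink of $C^{p_t}_{k_t}$ has in-degree $2$ in the cycle), and it is precisely at these shared layers that the obstruction lives; treating the strips as independent standalone paths hides it.
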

	
\begin{proof}
Let $D=C_{k_1}^{p_1}\times\cdots\times C_{k_t}^{p_t}$ and $D'=C_{k_1}^{p_1}\times\cdots\times C_{k_{t-1}}^{p_{t-1}}$. If $p_1=\cdots=p_{t}=0$, then $D\cong gcd(k_1,\ldots,k_t)C^0_{lcm(k_1,\ldots,k_t)}$ by Corollary \ref{cyclestructure}. If one of $\{k_1,\ldots,k_t\}$ is even, then $D$ is an ECD digraph by Corollary \ref{cycles}. So, we may assume $p_1=\cdots=p_{t-1}=0$, $p_t>0$, and every sink is at even distance to at least one of its neighboring sources in $C_{k_t}^{p_t}$. Again, $D'\cong gcd(k_1,\ldots,k_{t-1})C^0_{lcm(k_1,\ldots,k_{t-1})}$ by Corollary \ref{cyclestructure}. Therefore, we may observe every component of $D'$ separately, which means that we deal with $C=C^0_{lcm(k_1,\ldots,k_{t-1})}\times C_{k_t}^{p_t}$. Let $R=\{r_1,\ldots,r_{p_t}\}$ be all sources and $Q=\{q_1,\ldots,q_{p_t}\}$ be all sinks of $C_{k_t}^{p_t}$. Let $A$ be the subset of all vertices $z$ from $V(C_{k_t}^{p_t})$ such that the path between $z$ and its neighboring source is of even length. In addition, let $Q'\subseteq Q$ be the set containing all sinks for which both paths to his neighboring sources are of even distance. We will show that $S=V(C^0_{lcm(k_1,\ldots,k_{t-1})})\times S'$ is an ECD set of $C$ for $S'=A\cup R\cup Q'$.

First notice that $S'$ is an independent set of $C_{k_t}^{p_t}$. This yields also the independence of $S$. On the other hand, the set $S''=V(C_{k_t}^{p_t})-S'$ may induce some isolated arcs, but every such arc ends in a sink, that had exactly one of its neighboring sources at even distance. Moreover, every vertex from $S''$ has exactly one in-neighbor in $S'$. In particular, a sink that have one source at odd distance and the other source at even distance, has its in-neighbor on the odd side in $S'$. Therefore, every vertex from $V(C^0_{lcm(k_1,\ldots,k_{t-1})})\times S''$ has exactly one in-neighbor in $S$ and $S$ is an ECD set of $C$.

Conversely, let $C_{k_1}^{p_1}\times C_{k_t}^{p_t}$ be an ECD digraph with an ECD set $S$. With a purpose of contradiction, suppose that for instance $p_{t-1},p_t>0$. Without loss of generality, we chose the notation so that $v_2$ is a sink of $C_{k_{t-1}}^{p_{t-1}}$ and $u_2$ is a source of $C_{t_k}^{p_k}$. By (\ref{dirdeg}) every vertex of $D$ that projects to a sink (source) in one factor is a sink (source) in $D$ as well. Now, notice that the vertices $\{(v_1,u_2).(v_2,u_1),(v_2,u_3),(v_3,u_2)\}$ induce a cycle $C_4^2$ in $C_{k_{t-1}}^{p_{t-1}}\times C_{k_{t}}^{p_{t}}$ with sinks $(v_2,u_1)$ and $(v_2,u_3)$ and sources $(v_1,u_2)$ and $(v_3,u_2)$. Also, for any vertex $x$ from $C_{k_1}^{p_1}\times \cdots \times C_{k_{t-2}}^{p_{t-2}}$, the vertices $(x,v_2,u_1)$ and $(x,v_2,u_3)$ are sinks and the vertices $(x,v_1,u_2)$ and $(x,v_3,u_2)$ are sources from $D$. Since every source of an ECD digraph must be in any ECD set, it must happen $(x,v_2,u_1),(x,v_2,u_3)\in D$, which is a contradiction because $(x,v_1,u_2)$ and $(x,v_3,u_2)$ have both $(x,v_2,u_1)$ and $(x,v_2,u_3)$ as in-neighbors. Therefore, at most one number from $p_1,\ldots,p_t$ is different from 0 and we may assume by commutativity of the direct product that $p_1=\cdots =p_{t-1}=0$. If in addition also $p_t=0$, then (i) follows by Corollary \ref{cycles}.

Thus we may assume that $p_t>0$. We have $C^0_{k_1}\times \cdots \times C^0_{k_{t-1}}\cong gcd(k_1,\ldots,k_t)C^0_{lcm(k_1,\ldots,k_t)}$ by Corollary \ref{cyclestructure}. Therefore, it is enough to study only one component $D'=C^0_k\times C_{k_t}^{p_t}$ for $k=lcm(k_1,\ldots,k_t)$. In order to produce a contradiction, suppose that there exists a sink $q$ that has odd distance to both of its neighboring sources, say $r$ and $r'$. Let $rr_1r_2\ldots r_{k_1}q$ and $r'r'_1r'_2\ldots r'_{k_2}q$ where $k_1$ and $k_2$ are even numbers be the paths between $q$ and $r$ or $r'$. Clearly,  the vertex $(v_k,q)$ is a sink with $\delta^-_{D'}(v_1,q)=2$ by (\ref{dirdeg}). Also, there are exactly two oriented paths in $D'$ that end in $(v_1,q)$. These paths are
$$(v_{k-k_1-1},r)(v_{k-k_1},r_1)(v_{k-k_1+1},r_2)\ldots(v_{k-2},r_{k_1-1})(v_{k-1},r_{k_1})(v_k,q)$$
and
$$(v_{k-k_2-1},r')(v_{k-k_2},r'_1)(v_{k-k_2+1},r'_2)\ldots(v_{k-2},r'_{k_2-1})(v_{k-1},r'_{k_2})(v_k,q).$$
Moreover, the in-degree and out-degree of every inner vertex of these two paths is one. Because every source is in every ECD set of an ECD digraph, we have that $(v_{k-k_1-1},r),(v_{k-k_2-1},r')\in S$. This implies that vertices
$$(v_{k-k_1+1},r_2),(v_{k-k_2+1},r'_2),(v_{k-k_1+3},r_4),(v_{k-k_2+3},r'_4),\ldots (v_{k-1},r_{k_1}),(v_{k-1},r'_{k_2})$$
belong to $S$ as well so that above mentioned paths are efficiently dominated. But then $(v_k,q)$ is adjacent from two vertices $(v_{k-1},r_{k_1})$ and $(v_{k-1},r'_{k_2})$ of $S$, which is a contradiction. Thus, every sink must have at least one of its neighboring sources at even distance, (ii) follows and the proof is complete. 
\end{proof}

We next continue with the case of paths. Here we cannot use Theorem \ref{dirprodstructure}, because paths are not strongly connected, see \cite{Mcan}.

\begin{theorem}
\label{ecddirpath} The direct product $P_{k_1}\times\cdots\times P_{k_t}$ is an ECD digraph if and only if $P_{k_i}$ does not contain a sink of degree 2 for every $i\in\{1,\ldots,t\}$.
\end{theorem}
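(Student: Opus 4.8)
The plan is to analyze the structure of $P_{k_1}\times\cdots\times P_{k_t}$ directly, exploiting the degree formula (\ref{dirdeg}) together with the elementary observation used already in the proof of Theorem~\ref{ecddircycle}: in an ECD digraph every source must belong to every ECD set, and every sink has in-degree at least one (it cannot be isolated). I would begin by recording the basic facts about directed paths $P_{k_i}$: a directed path is a concatenation of ``maximal monotone runs'', and its sinks and sources alternate along the path. A sink of degree $2$ in $P_{k_i}$ is an internal vertex $x$ with $\delta^-(x)=2$, $\delta^+(x)=0$; a sink of degree $1$ is an endpoint that is a sink. The key product fact from (\ref{dirdeg}) is that $(d_1,\ldots,d_t)$ is a sink in the direct product exactly when some coordinate $d_i$ is a sink in $P_{k_i}$, and likewise for sources; moreover $\delta^-$ in the product is the product of the $\delta^-$ in the factors.

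\textbf{Necessity.} Suppose some factor, say $P_{k_1}$, contains a degree-$2$ sink $x$, so $x$ has two in-neighbors $a,b$ in $P_{k_1}$, and $a,b$ are either sources or internal vertices; in any case, walking back from $a$ (resp. $b$) along $P_{k_1}$ we reach a source $r$ (resp. $r'$) of $P_{k_1}$, and $r\neq r'$ because the two sides of $x$ are separated. I would now mimic exactly the contradiction engine from the proof of Theorem~\ref{ecddircycle}, Case~(ii): fix any vertex $y=(y_2,\ldots,y_t)$ which is a source in $P_{k_2}\times\cdots\times P_{k_t}$ (such a vertex exists, e.g. the all-initial-vertices tuple); then consider the two directed paths in $P_{k_1}$ from $r$ to $x$ and from $r'$ to $x$. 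Taking the product of each with the single vertex $y$ gives two directed paths in $D=P_{k_1}\times\cdots\times P_{k_t}$ ending at the sink $(x,y)$, whose endpoints $(r,y)$ and $(r',y)$ are sources of $D$ and whose internal vertices all have in-degree and out-degree one. Since $(r,y),(r',y)\in S$ for any ECD set $S$, efficient domination forces the alternate-vertex pattern along each path to lie in $S$, so the last internal vertex of each path is in $S$; but both of these are in-neighbors of $(x,y)$, so $(x,y)$ is dominated twice, contradicting that $S$ is an ECD set. Hence no factor may contain a degree-$2$ sink. The one subtlety is the parity: a degree-$2$ sink $x$ in $P_{k_1}$ has \emph{both} sides being monotone runs of \emph{some} lengths, and I should check that the ``alternate vertices land on the in-neighbor of $(x,y)$'' conclusion holds regardless of the parities of those two runs --- this works because we push from the source end, and on a directed path of \emph{any} length starting from a mandatory-in-$S$ source, the vertex immediately before the terminal sink must be in $S$ (otherwise that sink's in-neighbor is undominated, or the sink is dominated by a vertex of $S$ which would then also dominate the previous run vertex). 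So parity does not matter here, unlike in the cyclic case.

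\textbf{Sufficiency.} Suppose every $P_{k_i}$ has no degree-$2$ sink. Equivalently every internal vertex of every $P_{k_i}$ has out-degree $\geq 1$, so the only sinks of $P_{k_i}$ are (at most two) endpoints. I would like to exhibit an explicit ECD set of $D=P_{k_1}\times\cdots\times P_{k_t}$. The natural candidate is built from each factor: choose for each $P_{k_i}$ the ``canonical'' ECD-flavored set $S_i'$ consisting of all sources together with all vertices at even distance from their neighboring source, as in the proof of Theorem~\ref{ecddircycle}; the no-degree-$2$-sink hypothesis guarantees that each factor's sink situation is benign (each sink sits at the end of a single monotone run from a source). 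Then take $S = S_1'\times\cdots\times S_t'$, or more precisely I expect the right object is obtained by combining factor-wise ``distance-from-source'' colourings: assign to each vertex $d_i$ of $P_{k_i}$ a label $\ell_i(d_i)\in\mathbb{Z}$ equal to its signed distance along its monotone run from the preceding source, and put a vertex $(d_1,\ldots,d_t)$ into $S$ when all the $\ell_i(d_i)$ are even (with the convention that makes sinks and their in-neighbors behave correctly). The verification splits into: (a) $S$ is independent in $D$ --- any arc changes exactly one coordinate's label by $\pm 1$, flipping a parity; (b) every vertex outside $S$ has \emph{at least} one in-neighbor in $S$ --- trace back one step in some coordinate whose label is odd, which is possible precisely because the no-degree-$2$-sink condition prevents getting stuck (every odd-labelled non-source coordinate has an in-neighbor, and one can choose a coordinate to avoid landing on a sink issue); (c) every such vertex has \emph{at most} one in-neighbor in $S$. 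Part (c) is where I must be careful, and I expect it to be the main obstacle: a vertex $(d_1,\ldots,d_t)$ could in principle have two in-neighbors differing in two different coordinates both lying in $S$; the argument of Theorem~\ref{ecddircycle}'s proof handles a single sink coordinate, but here I need to argue that the parity bookkeeping is globally consistent so exactly one coordinate is the ``odd'' one that we stepped back in. I would resolve this by checking that at any non-$S$ vertex there is a \emph{unique} coordinate whose parity forces the backward step --- essentially because ``all even'' vs.\ ``exactly one odd'' are the only two options that can occur for in-neighbors of each other, and the no-degree-$2$-sink condition is exactly what prevents the degenerate configuration (two sink coordinates, forcing two mandatory backward steps) that broke the argument in necessity.

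Finally I would remark that this recovers Corollary~\ref{cycles}-type behavior and note the contrast with the cyclic case: for paths the obstruction is purely local (a degree-$2$ sink), with no parity/length condition needed, because a path always has a ``free'' source end to anchor the alternating ECD pattern, whereas a cycle wraps around and the parities must close up consistently. The genuinely delicate point remains part (c) of sufficiency --- ensuring the at-most-one in-neighbor property across all $t$ coordinates simultaneously --- and I would organize that as a short lemma about which vertices of $D$ can be mutual in-neighbor/out-neighbor within a candidate ECD set, proved by the degree identity (\ref{dirdeg}) plus the observation that a vertex with two sink coordinates would be forced into the same contradiction exhibited in the necessity direction, hence cannot arise under our hypothesis.
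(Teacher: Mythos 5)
Your necessity argument breaks exactly where you ``take the product of each path with the single vertex $y$'': in the \emph{direct} product an arc from $(a,y)$ to $(b,y)$ would require a loop at $y$, so the vertices $(r,y),(r_1,y),\ldots,(x,y)$ with $y$ held fixed span no arcs at all, and the two ``directed paths ending at the sink $(x,y)$'' that your alternation argument needs simply do not exist. Worse, if $y$ is a source of $P_{k_2}\times\cdots\times P_{k_t}$, then by (\ref{dirdeg}) \emph{every} vertex $(v,y)$ has in-degree $\delta^-(v)\cdot 0=0$, so your ``internal vertices of in- and out-degree one'' are in fact all sources, and the parity bookkeeping has nothing to act on. (In the proof of Theorem~\ref{ecddircycle} the exhibited paths move in \emph{both} coordinates at once; that is what you would have to imitate.) The idea can be salvaged much more cheaply: take $y$ with every coordinate a source of its factor (assuming, as the paper implicitly does, $k_i\ge 2$), so $y$ is a non-isolated source of the remaining product; then $(a,y)$ and $(b,y)$, where $a,b$ are the two in-neighbors of the degree-$2$ sink $x$, are sources of $D$ sharing the out-neighbor $(x,z)$ for any $z\in N^+(y)$, and since every source lies in every ECD set this is already the contradiction --- no paths, no parities. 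The paper instead settles $t=2$ via small components and then inducts on $t$; either route works, but yours as written does not.

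The sufficiency half carries the same Cartesian-product reflex. Your step (a) claims an arc of the product ``changes exactly one coordinate's label by $\pm1$''; in the direct product an arc changes \emph{every} coordinate, stepping each factor one unit away from its source. More importantly, your candidate set (all coordinate labels even) is not an ECD set: in $P_3\times P_2$ with $P_3=a\to b\to c$ and $P_2=u\to v$, the vertex $(b,u)$ is a source of the product (hence must lie in every ECD set) but has odd first label, and the vertex $(c,v)$, whose coordinates both have positive in-degree, is neither in your set nor dominated by it, since its unique in-neighbor $(b,u)$ has mixed parity. Finally, the point you single out as ``the genuinely delicate point'', your part (c), is actually trivial under the hypothesis, and missing this is missing the heart of the direction: a directed path has a vertex of in-degree $2$ precisely when it has a degree-$2$ sink, so under the hypothesis every factor vertex has in-degree at most $1$, hence by (\ref{dirdeg}) every vertex of the product has in-degree at most $1$ and no vertex can ever be dominated twice. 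What actually needs checking (and what the paper does) is that the components of the product are acyclic, so that the set of all sources of the \emph{product} together with all vertices at even distance from these sources is independent and dominating; your factor-wise parity set does not coincide with that set, as the example above shows.
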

	
\begin{proof}
Suppose first that one path, say $P_{k_1}$, has a sink $s$ of degree 2 and let $x$ and $y$ be the in-neighbors of $s$ on $P_{k_1}$. Suppose that one other path, say $P_{k_2}$, has a source $q$ of out-degree 1 and that $w$ is adjacent from $q$. By (\ref{dirdeg}), the vertex $(s,w)$ is a sink of $P_{k_1}\times P_{k_2}$ with exactly two in-neighbors $(x,q)$ and $(y,q)$. Moreover, these three vertices form a component $C$ of $P_{k_1}\times P_{k_2}$. By (\ref{dirdeg}), the vertices $(x,q)$ and $(y,q)$ are sources of out degree 1. Since all sources must be in every ECD set, it is readily seen that $P_{k_1}\times P_{k_2}$ is not an ECD digraph, as two sources have the same out-neighbor. We may assume now that both vertices of degree one in $P_{k_2}$ are sinks which implies that there exists a source $q$ in $P_{k_2}$ of out-degree 2. Let $u$ and $v$ be out-neighbors from $q$. Note that the vertices $(s,u)$ and $(s,v)$ are sinks adjacent from vertices $(x,q)$ and $(y,q)$ which are sources. Moreover, these four vertices form a component $C'$ of $P_{k_1}\times P_{k_2}$. Again we see that $P_{k_1}\times P_{k_2}$ is not an ECD digraph. Thus, we have proved the statement for $t=2$.

Let now $t>2$ and assume by induction that the direct product of $t-1$ paths is not an ECD digraph if there exists a sink of degree 2 in one of these paths.  If $P_{k_1}$ has a sink of degree 2 and $P_{k_2}$ a source of degree 1, then by the same notation as above, there exists a component $C$ of $P_{k_1}\times P_{k_2}$ that is a path with a sink of degree 2. Hence, the product $C\times P_{k_3}\times\cdots\times P_{k_t}$ is not an ECD digraph by induction hypothesis, and therefore, also $P_{k_1}\times\cdots\times P_{k_t}$ is not an ECD digraph. Now we may assume that all vertices of degree one are sinks in all paths $P_{k_2},\ldots,P_{k_t}$. We consider a component $C'$ in $P_{k_1}\times P_{k_2}$. It is easy to check that the direct product of $C'$ and an arbitrary arc results in a subdigraph isomorphic to $C'$. Even more, if a neighbor $x$ of a sink $s$ of degree one is not a source, then this subdigraph is a component isomorphic to $C'$. If $x$ is a source, then all vertices of a layer through $x$ are sources and must therefore be in every ECD set if the digraph is an ECD digraph. The product $P_{k_1}\times\cdots\times P_{k_t}$ is therefore not an ECD digraph because it contains a component isomorphic to $C'$ or $C'\times(x,s)\cong C'$ where $x$ is a source.

Conversely, suppose that $P_{k_i}$ does not contain a sink of degree 2 for every $i\in\{1,\ldots,t\}$. This means that any path $P_{k_i}$ has either one source of degree 2 and two sinks of degree one, or one source and one sink both of degree 1, for every $i\in\{1,\ldots,t\}$. By (\ref{dirdeg}), every vertex of degree more than 2 in $P_{k_1}\times\cdots \times P_{k_t}$ is a source. Also, a vertex of degree 2 can be a source, but in this case this vertex is the only source in a component which is a path. All other vertices of degree 2 have exactly one in-neighbor and one out-neighbor. All sinks are in vertices of degree 1, again by (\ref{dirdeg}). Thus, every component of the underlying graph of $P_{k_1}\times\cdots\times P_{k_t}$ is without cycles. A set that contains all sources of $P_{k_1}\times\cdots\times P_{k_t}$ together with all vertices at even distance from these sources form an ECD set of $P_{k_1}\times\cdots\times P_{k_t}$ and this digraph is an ECD digraph. 
\end{proof}


\section{Strong and lexicographic products}

The most ``friendly'' product with respect to ECD digraphs is the strong product due to (\ref{strongneig}). The next result is therefore not highly surprising.

\begin{theorem}
\label{ecdstrong} Let $D$ and $F$ be digraphs. The strong product $D\boxtimes F$
is an ECD digraph if and only if $D$ and $F$ are ECD graphs.
\end{theorem}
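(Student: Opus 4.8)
The plan is to run everything through the identity (\ref{strongneig}), namely $N_{D\boxtimes F}^+[(d,f)]=N_D^+[d]\times N_F^+[f]$, which converts the statement ``the closed out-neighborhoods centered at $S$ partition $V(D\boxtimes F)$'' into a statement about Cartesian products of closed out-neighborhoods of the two factors.

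For the \emph{if} direction, suppose $S_D$ is an ECD set of $D$ and $S_F$ is an ECD set of $F$; I would show $S_D\times S_F$ is an ECD set of $D\boxtimes F$. Covering is immediate: given $(d,f)$, choose $d'\in S_D$ with $d\in N_D^+[d']$ and $f'\in S_F$ with $f\in N_F^+[f']$; then $(d,f)\in N_D^+[d']\times N_F^+[f']=N_{D\boxtimes F}^+[(d',f')]$ with $(d',f')\in S_D\times S_F$. For disjointness, if some $(d,f)$ lay in $N^+_{D\boxtimes F}[(d_1',f_1')]\cap N^+_{D\boxtimes F}[(d_2',f_2')]$ with $(d_1',f_1')\neq(d_2',f_2')$ in $S_D\times S_F$, then by (\ref{strongneig}) we would have $d\in N_D^+[d_1']\cap N_D^+[d_2']$, forcing $d_1'=d_2'$ since $S_D$ is an ECD set, and then $f\in N_F^+[f_1']\cap N_F^+[f_2']$ forcing $f_1'=f_2'$ since $S_F$ is an ECD set, a contradiction. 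Hence $D\boxtimes F$ is an ECD digraph.

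For the \emph{only if} direction, let $S$ be an ECD set of $D\boxtimes F$ and fix any $f_0\in V(F)$; I would analyse how $S$ partitions the $D$-layer $D^{f_0}$. By (\ref{strongneig}), for $(d',f')\in S$ the intersection $N^+_{D\boxtimes F}[(d',f')]\cap D^{f_0}$ is nonempty exactly when $f_0\in N_F^+[f']$, i.e. $f'\in N_F^-[f_0]$, and in that case it equals $N_D^+[d']\times\{f_0\}$. Since the closed out-neighborhoods centered at $S$ partition $V(D\boxtimes F)$, their traces on $D^{f_0}$ partition $D^{f_0}$; after projecting onto $D$ this says that the sets $N_D^+[d']$, over all pairs $(d',f')\in S$ with $f'\in N_F^-[f_0]$, partition $V(D)$. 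Because $d'\in N_D^+[d']$, no vertex $d'$ can be associated with two distinct such $f'$ (that would force $N_D^+[d']\cap N_D^+[d']=\emptyset$), so these sets are indexed by a well-defined subset $S'\subseteq V(D)$, and $S'$ is therefore an ECD set of $D$. Since $D\boxtimes F\cong F\boxtimes D$, the symmetric argument (fixing a vertex of $D$ and working in an $F$-layer) shows $F$ is an ECD digraph as well.

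I do not expect a genuine obstacle. The only point that needs a little care is the observation in the converse that the traces of the out-neighborhoods of $S$ on a single layer not only cover but actually \emph{partition} that layer, so that when we project down onto $D$ there is no double counting and we obtain a bona fide partition of $V(D)$; everything else is a direct unwinding of (\ref{strongneig}).
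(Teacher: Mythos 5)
Your proposal is correct and follows essentially the same route as the paper: in the forward direction it uses $S_D\times S_F$ together with (\ref{strongneig}) exactly as the paper does, and in the converse it fixes a vertex $f_0$ of $F$, intersects the closed out-neighborhoods of $S$ with the layer $D^{f_0}$, and projects to obtain an ECD set of $D$, which is the paper's argument as well (your explicit check that no $d'$ is paired with two different $f'$ is the same point the paper handles implicitly via disjointness of the neighborhoods in the product). No gaps; nothing further is needed.
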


\begin{proof}
Let first $D$ and $F$ be ECD digraphs with ECD sets $S_D$ and $S_F$, respectively. We claim that $S_D\times S_F$ is an ECD set of $D\boxtimes F$. By (\ref{strongneig})
$$V(D\boxtimes F)\subseteq \bigcup_{(d,f)\in S_D\times S_F}N_{D\boxtimes F}^+[(d,f)].$$
Suppose that there exist different vertices $(d,f)$ and $(d'f')$ from $S_D\times S_F$ such that the intersection of their closed out-neighborhoods is nonempty. This means that there exists $(d_0,f_0)\in N_{D\boxtimes F}^+[(d,f)]\cap N_{D\boxtimes F}^+[(d',f')]$. If $d=d'$, then $f\neq f'$, and by (\ref{strongneig}) we have
$$N_{D\boxtimes F}^+[(d,f)]\cap N_{D\boxtimes F}^+[(d',f')]=\left(N_{D}^+[d]\times N_{F}^+[f]\right)\cap \left(N_{D}^+[d]\times N_{F}^+[f']\right)=N_{D}^+[d]\times \left(N_{F}^+[f]\cap N_{F}^+[f']\right).$$
Thus, $f_0\in N_{F}^+[f]\cap N_{F}^+[f']$ which is not possible for an ECD set $S_F$. We obtain a contradiction by symmetric arguments if $f=f'$. So, suppose that $d\neq d'$ and $f\neq f'$. Again by (\ref{strongneig}) we obtain that  $d_0\in N_{D}^+[d]\cap N_{D}^+[d']$ and $f_0\in N_{F}^+[f]\cap N_{F}^+[f']$, a contradiction with $S_D$ and $S_F$ being ECD sets of $D$ and $F$, respectively. Therefore,
$$N_{D\boxtimes F}^+[(d,f)]\cap N_{D\boxtimes F}^+[(d',f')]=\emptyset$$
for every two different vertices of $S_D\times S_F$ and $D\boxtimes F$ is an ECD digraph.

Conversely, let $D\boxtimes F$ be an ECD digraph with an ECD set $S$. Let $f$ be an arbitrary vertex from $F$. Every vertex from $D^f$ is in exactly one closed neighborhood of a vertex from $S$. Denote by $S_f$ the set of all vertices from $S$ whose closed neighborhoods have a nonempty intersection with $D^f$. We will show that $p_D(S_f)$ form an ECD set of $D$. Let first $d$ and $d'$ be two different vertices from $p_D(S_f)$ and suppose that $(d,f_1)$ and $(d',f_2)$ are from $S_f$. If the intersection of closed out-neighborhoods centered in $d$ and in $d'$ is nonempty, that is $d_0\in N_D^+[d]\cap N_D^+[d']$, then $(d_0,f)\in N_{D\boxtimes F}^+[(d,f_1)]\cap N_{D\boxtimes F}^+[(d',f_2)]$, which is a contradiction, because $(d,f_1)$ and $(d',f_2)$ are different and belong to an ECD set of $D\boxtimes F$. On the other hand, $V(D)\subseteq \bigcup_{d\in p_D(S_f)}N_D^+[d]$, because $S$ is an ECD set of $D\boxtimes F$, and by (\ref{strongneig}). Therefore, is $p_D(S_f)$ an ECD set of $D$ and $D$ is an ECD digraph. By using symmetric arguments, we can show that also $F$ is an ECD digraph and the proof is completed. 
\end{proof}

By observing (\ref{lexneig}), we can not expect that ECD digraphs among the lexicographic products will be as rich as in the case of other standard products. Nevertheless, there are two main exceptions, from which at least one can be considered as trivial, see (i) of the following theorem.

\begin{theorem}
\label{ecdlex}Let $D$ and $F$ be digraphs. The lexicographic product $D\circ F$
is an ECD digraph if and only if

\begin{itemize}
\item[{\rm (i)}] $D$ is a digraph without arcs and $F$ is an ECD digraph, or

\item[{\rm (ii)}] $D$ is an ECD digraph and $F$ contains an out-universal vertex.
\end{itemize}
\end{theorem}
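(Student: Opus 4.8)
The plan is to exploit formula (\ref{lexneig}), which says $N_{D\circ F}^+[(d,f)] = N_D^+(d)\times V(F)\,\cup\,\{d\}\times N_F^+[f]$. The key structural observation is that a closed out-neighborhood in $D\circ F$ centered at $(d,f)$ contains \emph{all} of the $F$-layers $^{d'}\!F$ for $d'\in N_D^+(d)$, but touches the layer $^{d}\!F$ only in the set $N_F^+[f]$. So if $S$ is an ECD set of $D\circ F$ and we let $S_d = \{f : (d,f)\in S\}$, then for two centers $(d,f),(d,f')$ in the same layer to have disjoint closed out-neighborhoods we need $N_F^+[f]\cap N_F^+[f']=\emptyset$ (the $N_D^+(d)\times V(F)$ parts coincide), and for covering the layer $^d\!F$ we need $\bigcup_{f\in S_d}N_F^+[f]\supseteq V(F)$ whenever no arc of $D$ points into $d$ from a vertex carrying an $S$-center; this will force $S_d$ to be an ECD set of $F$ in the relevant cases.

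For the \emph{sufficiency} direction I would treat the two cases directly. In case (i), $D$ has no arcs, so $D\circ F$ is just $|V(D)|$ disjoint copies of $F$, and $S = V(D)\times S_F$ (with $S_F$ an ECD set of $F$) is clearly an ECD set. In case (ii), let $S_D$ be an ECD set of $D$ and let $f^\*$ be an out-universal vertex of $F$, so $N_F^+[f^\*]=V(F)$; I claim $S = S_D\times\{f^\*\}$ works. By (\ref{lexneig}), $N_{D\circ F}^+[(d,f^\*)] = N_D^+(d)\times V(F)\,\cup\,\{d\}\times V(F) = N_D^+[d]\times V(F)$, so the closed out-neighborhoods of $S$ are exactly the sets $N_D^+[d]\times V(F)$ for $d\in S_D$, which partition $V(D\circ F)$ because $\{N_D^+[d]:d\in S_D\}$ partitions $V(D)$.

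For the \emph{necessity} direction, suppose $D\circ F$ is an ECD digraph with ECD set $S$. First handle the degenerate possibility that $D$ has no arcs: then each layer $^d\!F$ is a component isomorphic to $F$, the restriction of $S$ to each layer must be an ECD set of $F$, and we land in (i). So assume $D$ has at least one arc, say $(a,b)\in A(D)$. Fix a vertex $d\in V(D)$ and look at the layer $^d\!F$. The in-neighbors of $(d,f)$ in $D\circ F$ lie in $\{d\}\times V(F)$ together with $N_D^-(d)\times V(F)$; in particular if some $(d',f')\in S$ with $d'\in N_D^-(d)$, then by (\ref{lexneig}) the whole layer $^d\!F$ is covered by that one center, which forces $S\cap(\{d\}\times V(F))=\emptyset$ and that $d$ receives no \emph{second} dominating arc from any $N_D^-(d)$ direction. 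Analyzing this, let $S_D' = \{d : S\cap(\{d\}\times V(F))\neq\emptyset\}$; the argument shows $\{N_D^+[d] : d\in S_D'\}$ must partition $V(D)$, so $D$ is an ECD digraph and $S_D'$ is an ECD set of $D$. It then remains to show $F$ has an out-universal vertex: pick any $d\in S_D'$ and consider $S_d=\{f:(d,f)\in S\}$. Since $d$ has an out-neighbor in $D$ (else, as $D$ has an arc and $D$ may be assumed connected component-wise, or by re-choosing — here I would be careful: if $d$ is a sink in $D$ one argues inside the component), the layer $^d\!F$ must be covered using only the $\{d\}\times N_F^+[f]$ pieces, forcing $\bigcup_{f\in S_d}N_F^+[f]=V(F)$ with the unions pairwise disjoint, i.e.\ $S_d$ is an ECD set of $F$. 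If $|S_d|\ge 2$, pick $f_1\ne f_2\in S_d$; then the center $(d,f_1)$ covers $N_D^+(d)\times V(F)$ and so does $(d,f_2)$, and since $d$ has an out-neighbor $d'$, the layer $^{d'}\!F$ is hit by both closed out-neighborhoods, contradicting disjointness. Hence $|S_d|=1$, so the single element $f$ of $S_d$ has $N_F^+[f]=V(F)$, i.e.\ $f$ is out-universal in $F$, giving (ii).

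The main obstacle I expect is the bookkeeping in the necessity direction: cleanly ruling out ``mixed'' $S$ (centers both inside a layer $^d\!F$ and in some $N_D^-(d)$-layer), and handling vertices $d$ that are sinks of $D$ — for a sink $d$ there is no layer $^{d'}\!F$ with $d'\in N_D^+(d)$ to force $|S_d|=1$, so one must argue that a sink of $D$ cannot carry two $S$-centers by a different route (namely covering $^d\!F$ efficiently already forces $S_d$ to be an ECD set of $F$, but a second center would then be used with its full $N_F^+[f]$ inside the layer while the ECD-of-$F$ structure is what provides the contradiction together with connectivity). Making this casework airtight, while keeping the exposition short, is where the real work lies; the sufficiency direction and formula (\ref{lexneig}) do essentially all the rest.
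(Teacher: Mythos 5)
Your sufficiency argument and most of your necessity argument are correct and run along essentially the same lines as the paper's proof: everything is driven by (\ref{lexneig}), the case $A(D)=\emptyset$ gives (i), and in case (ii) the set $S_D\times\{f^*\}$ works exactly as you compute. Your intermediate claim that $S_D'=\{d: S\cap(\{d\}\times V(F))\neq\emptyset\}$ is always an ECD set of $D$ is also correct, and it organizes the necessity direction a bit differently from the paper (which instead splits on whether the $F$-coordinate of an arbitrarily chosen center is out-universal); that part is fine.

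There is, however, a genuine gap exactly where you flagged ``the real work'': you never establish the existence of a vertex $d\in S_D'$ with $\delta_D^+(d)>0$, which is what your argument needs in order to force $|S_d|=1$ and hence an out-universal vertex of $F$. The fallback you sketch for sinks --- proving that a sink of $D$ cannot carry two $S$-centers, using ``the ECD-of-$F$ structure together with connectivity'' --- cannot be made to work, because the claim is false (and $D$ need not be connected anyway). For example, let $D$ be the directed path $a'\to a\to b$ and let $F$ have vertices $f_0,z,w$ and arcs $(f_0,z),(f_0,w),(z,f_0)$; then $S=\{(a',f_0),(b,z),(b,w)\}$ is an ECD set of $D\circ F$, the sink $b$ carries two centers, and the center $(b,z)$ even has a non-out-universal $F$-coordinate, although $D$ is connected. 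So per-layer uniqueness of centers simply fails at sinks, and any route through it breaks down. The gap is easy to close, though, and without any appeal to connectivity: if $A(D)\neq\emptyset$, pick an arc $(a,b)$ and any vertex $(a,f)$. Its dominating center $(d_0,f_0)\in S$ either lies in $^{a}\!F$, in which case $d_0=a\in S_D'$ and $\delta_D^+(a)>0$ because of the arc $(a,b)$, or satisfies $a\in N_D^+(d_0)$, in which case $d_0\in S_D'$ again has positive out-degree. Applying your $|S_{d_0}|=1$ argument to this particular $d_0$ (rather than to an arbitrary $d\in S_D'$) produces the out-universal vertex of $F$ and completes (ii).
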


\begin{proof}
If $D$ is a digraph on $n$ vertices without edges,
then $D\circ F$ is isomorphic to $n$ copies of $F$. If in addition $F$ is an ECD digraph, then also $n$ copies of $F$ form an ECD digraph. Now, let $D$ be an ECD digraph, let $S_D$ be
an ECD set and let $f_{0}$ be an out-universal vertex of $F$. We
will show that $S_D\times \{f_{0}\}$ is an ECD set
of $G\circ H$. Because $f_0$ is an out-universal vertex of $F$ we have that $N^-_{D\circ F}[(d,f_{0})]=N_{D}[(d)]\times
V(F)$ by (\ref{lexneig}), and so, $\bigcup _{d\in S_D}N_{D\circ F}[(d,f_{0})]=V(G\times H)$.
If $d,d'\in S_D$ and $d\neq d'$, then $N_{D\circ F}[(d,f_{0})]\cap N_{D\circ F}[(d',f_{0})]\neq \emptyset $ implies that $N_{D}[d]\cap N_{D}[d']\neq \emptyset $, which is a contradiction. Therefore, $D\circ F$ is an ECD graph.

Conversely, let $D\circ F$ be an ECD graph with an ECD set $S$. Let $(d,f)$ be an arbitrary vertex from  $S$.
Suppose first that $f$ is not an out-universal vertex in $F$. Then there exists a vertex, say $(d,f')$, in $^{d}\!F-N^+_{D\circ F}[(d,f)]$. Because $S$ is an ECD set, there exists $(d_0,f_0)$ such that $(d,f')\in N_{D\circ F}^+[(d_0,f_0)]$. If $d_0\neq d$, then $(d,f)\in N_{D\circ F}^+[(d_0,f_0)]\cap N_{D\circ F}^+[(d,f)]$, which is not possible for two different vertices $(d_0,f_0)$ and $(d,f)$ from $S$. Therefore, we have $d_0=d$. If $\delta_D(d)^+>0$, then again $(d',f)\in N_{D\circ F}^+[(d_0,f_0)]\cap N_{D\circ F}^+[(d,f)]$ for any $d'\in N_D^+(d)$, which is not possible. So, $\delta_D(d)^+=0$ for every vertex $d\in V(D)$ such that $(d,f)\in S$. Since every vertex $(d'',f'')\in V(D\circ F)$ is in exactly one $N_{D\circ F}^+[(d,f)]$, with $(d,f)\in S$, we infer that $d''\neq d$ yields $\delta_D(d)^+>0$, which is not possible. Therefore, $d''=d$ and $\delta_D(d'')^+=0$ for any $d''\in V(D)$. But then $D$ has no arcs at all and it is an empty digraph. To see that $F$ is an ECD digraph, we observe any $F$-layer $^{d}\!F$ (which is always isomorphic to $F$). Clearly, $^{d}\!F\cap S$ form an ECD set of $^{d}\!F\cong F$ and $F$ is an ECD digraph and (i) follows.

Now we may assume that $f$ is an out-universal vertex (recall that $(d,f)$ is an arbitrary vertex from $S$). Notice that in this case $^{d}\!F\subseteq N_{D\circ F}^+[(d,f)]$. We claim that $p_D(S)$ is an ECD set of $D$. If not, then there exist $d,d'\in p_D(S)$ such that $N_D^+(d)\cap N_D^+(d')\neq \emptyset$. This fact together with (\ref{lexneig}) yield a contradiction with $S$ being an ECD set of $D\circ H$. Therefore, $p_D(S)$ is an ECD set of $D$ and (ii) follows. 
\end{proof}

\end{document}